%


\documentclass[bjps]{imsart}

\usepackage{amsfonts,amsmath,latexsym,amssymb,mathrsfs,amsthm,xcolor, url}



\usepackage{float}
\RequirePackage{enumitem}

\startlocaldefs
\bibliographystyle{Chicago}
\def\numberlikeadb{\global\def\theequation{\thesection.\arabic{equation}}}
\numberlikeadb
\newtheorem{theorem}{Theorem}[section]
\newtheorem{lemma}[theorem]{Lemma}
\newtheorem{corollary}[theorem]{Corollary}

\newtheorem{remark}[theorem]{Remark}

\newcommand{\Prob}{\mathbb{P}\,}
\newcommand{\EE}{\mathbb{E}}
\numberwithin{equation}{section}

\endlocaldefs

\begin{document}

\begin{frontmatter}

\title{Multivariate normal approximation of the maximum likelihood estimator via the delta method}

\runtitle{Multivariate normal approximation of the MLE}

\begin{aug}


\author{\fnms{Andreas} \snm{Anastasiou}\thanksref{c}\ead[label=e2]{A.Anastasiou@lse.ac.uk}}

\and

 \author{\fnms{Robert E.} \snm{Gaunt}\thanksref{a}\corref{Robert E. Gaunt}\ead[label=e1]{robert.gaunt@manchester.ac.uk}\ead[label=e2,url]{www.foo.com}}

\affiliation[c]{The London School of Economics and Political Science}
\affiliation[a]{The University of Manchester}


\runauthor{A. Anastasiou and R. E. Gaunt}

\end{aug}

\begin{abstract}We use the delta method and Stein's method to derive, under regularity conditions, explicit upper bounds for the distributional distance between the distribution of the maximum likelihood estimator (MLE) of a $d$-dimensional parameter and its asymptotic multivariate normal distribution. Our bounds apply in situations in which the MLE can be written as a function of a sum of i.i.d$.$ $t$-dimensional random vectors.  We apply our general bound to establish a bound for the multivariate normal approximation of the MLE of the normal distribution with unknown mean and variance.
\end{abstract}

\begin{keyword}[class=MSC]
\kwd[Primary ]{60F05}
\kwd{62E17}  
\kwd{62F12}
\end{keyword}

\begin{keyword}
\kwd{Multi-parameter maximum likelihood estimation}
\kwd{multivariate normal distribution}
\kwd{Stein's method}
\end{keyword}

\end{frontmatter}

\section{Introduction} 

The asymptotic normality of maximum likelihood estimators (MLEs), under regularity conditions, is one of the most well-known and fundamental results in mathematical statistics.  In a very recent paper, \cite{a15} obtained explicit upper bounds on the distributional distance between the distribution of the MLE of a $d$-dimensional parameter and its asymptotic multivariate normal distribution.  In the present paper, we use a different approach to derive such bounds in the special situation where the MLE can be written as a function of a sum of i.i.d$.$ $t$-dimensional random vectors.  In this setting, our bounds improve on (or are at least as good as) those of \cite{a15} in terms of sharpness, simplicity and strength of distributional distance.  The process used to derive our bounds consists a combination of Stein's method and the multivariate delta method. It is a natural generalisation of the approach used by \cite{al15} to obtain improved  bounds -- only in cases where the MLE can be expressed as a function of a sum of i.i.d$.$ random variables -- than those obtained by \cite{ar15} for the asymptotic normality of the single parameter MLE.

Let us now introduce the notation and setting of this paper.  Let $\boldsymbol{\theta}=(\theta_1,\ldots,\theta_d)$ denote the unknown parameter found in a statistical model.  Let $\boldsymbol{\theta_0}=(\theta_{0_1},\ldots,\theta_{0_d})$ be the true (still unknown) value and let $\boldsymbol{\Theta}\subset\mathbb{R}^d$ denote the parameter space.  Suppose $\mathbf{X}=(\mathbf{X}_1,\ldots,\mathbf{X}_n)$ is a random sample of $n$ i.i.d$.$ $t$-dimensional random vectors with joint probability density (or mass) function $f(\boldsymbol{x}|\boldsymbol{\theta})$, where $\boldsymbol{x}=(\boldsymbol{x}_1,\boldsymbol{x}_2,\ldots,\boldsymbol{x}_n)$.  The likelihood function is $L(\boldsymbol{\theta}; \boldsymbol{x}) = f(\boldsymbol{x}|\boldsymbol{\theta})$, and its natural logarithm, called the log-likelihood function, is denoted by $l(\boldsymbol{\theta};\boldsymbol{x})$. We shall assume that the MLE $\boldsymbol{\hat{\theta}_n(X)}$ of the parameter of interest $\boldsymbol{\theta}$ exists and is unique.  It should, however, be noted that uniqueness and existence of the MLE can not be taken for granted; see, for example, \cite{Billingsley} for an example of non-uniqueness.  Assumptions that ensure existence and uniqueness of the MLE are given in \cite{Makelainen}.

In addition to assuming the existence and uniqueness of the MLE, we shall assume that the MLE takes the following form.   Let $q: \boldsymbol{\Theta} \rightarrow \mathbb{R}^d$ be such as all the entries of the function $q$ have continuous partial derivatives with respect to $\boldsymbol{\theta}$ and that
\begin{equation}
\label{qproperty}
q(\boldsymbol{\hat{\theta}_n(x)}) = \frac{1}{n}\sum_{i=1}^{n}g(\boldsymbol{x_i}) = \frac{1}{n}\sum_{i=1}^{n}\big(
g_{1}(\boldsymbol{x_i}),\ldots,g_{d}(\boldsymbol{x_i})\big)^\intercal
\end{equation}
for some $g:\mathbb{R}^t\rightarrow\mathbb{R}^d$.  This setting is a natural generalisation of that of \cite{al15} to multiparameter MLEs.  The representation (\ref{qproperty}) for the MLE allows us to approximate the MLE using a different approach to that given in \cite{a15}, which results in a bound that improves on that obtained by \cite{a15}.  As we shall see in Section 3, a simple and important example of an MLE of the form (\ref{qproperty}) is given by the normal distribution with unknown mean and variance:
\begin{equation*}f(x|\boldsymbol{\theta}) = \frac{1}{\sqrt{2\pi\sigma^2}}\exp\left\lbrace -\frac{1}{2\sigma^2}(x-\mu)^2 \right\rbrace,\quad x \in \mathbb{R}. 
\end{equation*}


In this paper, we derive general bounds for the distributional distance between an MLE of the form (\ref{qproperty}) and its limiting multivariate normal distribution. The rest of the paper is organised as follows. In Section 2, we derive a general upper bound on the distributional distance between the distribution of the MLE of a $d$-dimensional parameter and the limiting multivariate normal distribution for situations in which (\ref{qproperty}) is satisfied.  In Section 3, we apply our general bound to obtain a bound for the multivariate normal approximation of the MLE of the normal distribution with unknown mean and variance.

\section{The general bound}
\label{sec:general_bound}

In this section, we give quantitative results in order to compliment the qualitative result for the multivariate normal approximation of the MLE as expressed below in Theorem \ref{Theorem_asymptotic_MLE_delta}. In the statement of this theorem, and the rest of the paper, $I(\boldsymbol{\theta_0})$ denotes the expected Fisher information matrix for one random vector, while $[I(\boldsymbol{\theta_0})]^{\frac{1}{2}}$ denotes the principal square root of $I(\boldsymbol{\theta_0})$.  Also, $I_{d \times d}$ denotes the $d\times d$ identity matrix.  To avoid confusion with the expected Fisher information matrix, the subscript will always be included in the notation for the $d\times d$ identity matrix.  Following \cite{Davison}, the following assumptions are made in order for the asymptotic normality of the vector MLE to hold:
\begin{itemize}[leftmargin=0.55in]
\item[(R.C.1)] The densities defined by any two different values of $\boldsymbol{\theta}$ are distinct.
\item[(R.C.2)] $\ell(\boldsymbol{\theta};\boldsymbol{x})$ is three times differentiable with respect to the unknown vector parameter, $\boldsymbol{\theta}$, and the third partial derivatives are continuous in $\boldsymbol{\theta}$.
\item[(R.C.3)] For any $\boldsymbol{\theta_0} \in \boldsymbol{\Theta}$ and for $\boldsymbol{\mathbb{X}}$ denoting the support of the data, there exists $\epsilon_0 > 0$ and functions $M_{rst}(\boldsymbol{x})$ (they can depend on $\boldsymbol{\theta_0}$), such that for $\boldsymbol{\theta} = (\theta_1, \theta_2, \ldots, \theta_d)$ and $r, s, t, j = 1,2,\ldots,d,$
\begin{equation}
\nonumber \left|\frac{\partial^3}{\partial \theta_r \partial \theta_s \partial \theta_t}\ell(\boldsymbol{\theta};\boldsymbol{x})\right| \leq M_{rst}(\boldsymbol{x}), \; \forall \boldsymbol{x} \in \boldsymbol{\mathbb{X}},\; \left|\theta_j - \theta_{0,j}\right| < \epsilon_0,
\end{equation}
with $\EE[M_{rst}(\boldsymbol{X})] < \infty$.
\item[(R.C.4)] For all $\boldsymbol{\theta} \in \Theta$, $\EE_{\boldsymbol{\theta}}[\ell_{\boldsymbol{X_i}}(\boldsymbol{\theta})] = 0$.
\item[(R.C.5)] The expected Fisher information matrix for one random vector $I(\boldsymbol{\theta})$ is finite, symmetric and positive definite. For $r,s=1,2,\ldots,d$, its elements satisfy
\begin{equation}
\nonumber n[I(\boldsymbol{\theta})]_{rs} = \EE\bigg[ \frac{\partial}{\partial \theta_r} \ell(\boldsymbol{\theta};\boldsymbol{X})\frac{\partial}{\partial \theta_s}\ell(\boldsymbol{\theta};\boldsymbol{X})\bigg] = \EE\bigg[-\frac{\partial^2}{\partial \theta_r \partial \theta_s} \ell(\boldsymbol{\theta};\boldsymbol{X}) \bigg].
\end{equation}
This  implies that $nI(\boldsymbol{\theta})$ is the covariance matrix of $\nabla(\ell(\boldsymbol{\theta};\boldsymbol{x}))$.
\end{itemize}
These regularity conditions in the multi-parameter case resemble those in \cite{ar15} where the parameter is scalar. The following theorem gives the qualitative result for the asymptotic normality of a vector MLE (see \cite{Davison} for a proof).
\begin{theorem}
\label{Theorem_asymptotic_MLE_delta}
Let $\boldsymbol{X_1}, \boldsymbol{X_2}, \ldots, \boldsymbol{X_n}$ be i.i.d$.$ random vectors with probability density (mass) functions $f(\boldsymbol{x_i}|\boldsymbol{\theta})$, $1\leq i\leq n$, where $\theta \in \Theta \subset \mathbb{R}^d$.  Also let $\boldsymbol{Z} \sim {\rm N}_{d}\left(\boldsymbol{0},I_{d\times d}\right)$, where $\boldsymbol{0}$ is the $d \times 1$ zero vector and $I_{d \times d}$ is the $d\times d$ identity matrix. Then, under (R.C.1)-(R.C.5),
\begin{equation}
\label{scoredistribution_multi}
\nonumber \sqrt{n}[I(\boldsymbol{\theta_0})]^{\frac{1}{2}}(\boldsymbol{\hat{\theta}_n(X)} - \boldsymbol{\theta_0}) \xrightarrow[{n \to \infty}]{{\rm d}} {\rm N}_d(\boldsymbol{0}, I_{d\times d}).
\end{equation}
\end{theorem}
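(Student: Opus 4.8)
The plan is to follow the classical route to asymptotic normality of the MLE, exploiting the first-order condition together with a Taylor expansion of the score, and then invoking the central limit theorem and the weak law of large numbers. First I would use that, under the stated regularity conditions, $\boldsymbol{\hat{\theta}_n(X)}$ lies in the interior of $\boldsymbol{\Theta}$ and solves the score equation $\nabla\ell(\boldsymbol{\hat{\theta}_n(X)};\boldsymbol{X})=\boldsymbol{0}$. Expanding each component of the score about the true value $\boldsymbol{\theta_0}$ via Taylor's theorem, with (R.C.2) supplying the required smoothness, I would obtain for $r=1,\ldots,d$,
\[
0=\frac{\partial}{\partial\theta_r}\ell(\boldsymbol{\theta_0};\boldsymbol{X})+\sum_{s=1}^{d}\frac{\partial^2}{\partial\theta_r\partial\theta_s}\ell(\boldsymbol{\theta_0};\boldsymbol{X})\big(\hat{\theta}_{n,s}-\theta_{0,s}\big)+\tfrac{1}{2}R_{n,r},
\]
where the remainder $R_{n,r}$ involves third derivatives of $\ell$ evaluated at an intermediate point on the segment joining $\boldsymbol{\hat{\theta}_n(X)}$ and $\boldsymbol{\theta_0}$. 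Writing this in matrix form and rearranging produces a representation of $\sqrt{n}(\boldsymbol{\hat{\theta}_n(X)}-\boldsymbol{\theta_0})$ in terms of the normalised score and the normalised observed information.

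Next I would treat the two leading stochastic pieces separately. The score $\nabla\ell(\boldsymbol{\theta_0};\boldsymbol{X})=\sum_{i=1}^{n}\nabla\ell(\boldsymbol{\theta_0};\boldsymbol{X_i})$ is a sum of i.i.d$.$ random vectors that, by (R.C.4), have mean zero and, by (R.C.5), have common covariance matrix $I(\boldsymbol{\theta_0})$; hence the multivariate central limit theorem yields $n^{-1/2}\nabla\ell(\boldsymbol{\theta_0};\boldsymbol{X})\xrightarrow{\mathrm{d}}\mathrm{N}_d(\boldsymbol{0},I(\boldsymbol{\theta_0}))$. For the second-derivative term, the weak law of large numbers together with the information identity in (R.C.5) gives $-\tfrac{1}{n}\nabla^2\ell(\boldsymbol{\theta_0};\boldsymbol{X})\xrightarrow{\mathrm{p}}I(\boldsymbol{\theta_0})$, which is invertible by the positive-definiteness assumed in (R.C.5).

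The step I expect to be the main obstacle is controlling the Taylor remainder, and implicitly establishing consistency of the MLE. Using (R.C.3) I would bound each $R_{n,r}$ by a sum of averages of the dominating functions $M_{rst}(\boldsymbol{X})$ multiplied by products $|\hat{\theta}_{n,s}-\theta_{0,s}|\,|\hat{\theta}_{n,t}-\theta_{0,t}|$; since $\EE[M_{rst}(\boldsymbol{X})]<\infty$ these averages are $O_p(1)$, so that once $\boldsymbol{\hat{\theta}_n(X)}\xrightarrow{\mathrm{p}}\boldsymbol{\theta_0}$ the remainder contributes only $o_p(1)$ after the relevant normalisation. Consistency itself, which guarantees that the intermediate evaluation point eventually lies in the $\epsilon_0$-neighbourhood where (R.C.3) applies, follows from the identifiability condition (R.C.1) together with the remaining regularity conditions in the standard manner (see \cite{Davison}).

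Finally I would assemble the pieces. Writing $H_n=-\tfrac{1}{n}\nabla^2\ell(\boldsymbol{\theta_0};\boldsymbol{X})$, the rearranged expansion reads $H_n\sqrt{n}(\boldsymbol{\hat{\theta}_n(X)}-\boldsymbol{\theta_0})=n^{-1/2}\nabla\ell(\boldsymbol{\theta_0};\boldsymbol{X})+o_p(1)$, and Slutsky's theorem combined with the continuous mapping theorem applied to matrix inversion (legitimate since $I(\boldsymbol{\theta_0})$ is invertible) yields $\sqrt{n}(\boldsymbol{\hat{\theta}_n(X)}-\boldsymbol{\theta_0})\xrightarrow{\mathrm{d}}\mathrm{N}_d(\boldsymbol{0},[I(\boldsymbol{\theta_0})]^{-1})$. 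Premultiplying by the symmetric matrix $[I(\boldsymbol{\theta_0})]^{1/2}$ transforms the covariance to $[I(\boldsymbol{\theta_0})]^{1/2}[I(\boldsymbol{\theta_0})]^{-1}[I(\boldsymbol{\theta_0})]^{1/2}=I_{d\times d}$, which gives the claimed limit.
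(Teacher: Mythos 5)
Your argument is the standard Taylor-expansion-of-the-score proof (CLT for the score, LLN for the observed information, remainder controlled via (R.C.3) and consistency, then Slutsky), which is precisely the route the paper defers to by citing \cite{Davison} rather than proving the theorem itself. The proposal is correct and matches that cited approach, including correctly identifying consistency and the remainder term as the delicate step.
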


Let us now introduce some notation. We let $C_b^n(\mathbb{R}^d)$ denote the space of bounded functions on $\mathbb{R}^d$ with bounded $k$-th order derivatives for $k\leq n$. We abbreviate $|h|_1 := \sup_i\big\|\frac{\partial}{\partial x_i}h\big\|$ and $|h|_2 := \sup_{i,j}\big\|\frac{\partial^2}{\partial x_i\partial x_j}h\big\|$, where $\|\cdot\|=\|\cdot\|_\infty$ is the supremum norm.  For ease of presentation, we let the subscript $(m)$ denote an index for which $\big|\hat{\theta}_n(\boldsymbol{x})_{(m)} - \theta_{0_{(m)}}\big|$ is the largest among the $d$ components:
\begin{equation}
\nonumber (m) \in \left\lbrace 1,2,\ldots,d\right\rbrace : \big|\hat{\theta}_n(\boldsymbol{x})_{(m)} - \theta_{0_{(m)}}\big| \geq \big|\hat{\theta}_n(\boldsymbol{x})_j - \theta_{0_j}\big|, \;\; \forall j \in \left\lbrace 1,2,\ldots, d \right\rbrace,
\end{equation}
and also
\begin{equation}
\label{cm}
Q_{(m)}=Q_{(m)}(\boldsymbol{X},\boldsymbol{\theta_0}) := \hat{\theta}_n(\boldsymbol{X})_{(m)} - \theta_{0_{(m)}}.
\end{equation}
The following theorem gives the main result of this paper.
\begin{theorem}
\label{Theorem_delta_multi}
Let $\boldsymbol{X_1}, \boldsymbol{X_2}, \ldots, \boldsymbol{X_n}$ be i.i.d$.$ $\mathbb{R}^t$-valued random vectors with probability density (or mass) function $f(\boldsymbol{x_i}|\boldsymbol{\theta})$, for which the parameter space $\Theta$ is an open subset of $\mathbb{R}^d$.  Let $\boldsymbol{Z}\sim {\mathrm {N}}_d(\boldsymbol{0},I_{d \times d})$.  Assume that the MLE $\boldsymbol{\hat{\theta}_n(X)}$ exists and is unique and that (R.C.1)-(R.C.5) are satisfied. Let $q:\Theta \rightarrow \mathbb{R}^d$ be twice differentiable and $g:\mathbb{R}^t\rightarrow\mathbb{R}^d$ such that the special structure \eqref{qproperty} is satisfied. Let
\begin{equation*}
\xi_{ij} = \sum_{l=1}^{d}\big[K^{-\frac{1}{2}}\big]_{kl}\left(g_l(\boldsymbol{X_i}) - q_l(\boldsymbol{\theta_0})\right),
\end{equation*}
where
\begin{equation}
\label{Kmatrix_general}
K= \left[\nabla q(\boldsymbol{\theta_0})\right]\left[I(\boldsymbol{\theta_0})\right]^{-1}\left[\nabla q(\boldsymbol{\theta_0})\right]^{\intercal}.
\end{equation}
Also, let $Q_{(m)}$ be as in \eqref{cm}.  Then, for all $h\in C_b^2(\mathbb{R}^d)$,
\begin{align}
\label{bounddeltageneral}
\nonumber & \left|\mathbb{E}\left[h\left(\sqrt{n}\left[I(\boldsymbol{\theta_0})\right]^{\frac{1}{2}}\left(\boldsymbol{\hat{\theta}_n(X)} - \boldsymbol{\theta_0}\right)\right)\right]- \mathbb{E}[h(\boldsymbol{Z})]\right|\\
\nonumber & \leq \frac{\sqrt{\pi}|h|_2}{4\sqrt{2}n^{3/2}}\sum_{i=1}^n\sum_{j,k,l=1}^d\Big\{\mathbb{E}\left|\xi_{ij}\xi_{ik}\xi_{il}\right|+2\left|\mathbb{E}\left[\xi_{ij}\xi_{ik}\right]\right|\mathbb{E}\left|\xi_{il}\right|\Big\} \nonumber\\
&\quad+ 2\frac{\|h\|}{\epsilon^2}\mathbb{E}\left[\sum_{j=1}^{d}\left(\hat{\theta}_n(\boldsymbol{X})_j-\theta_{0_j}\right)^2\right]\nonumber\\
& \quad+ \frac{|h|_1}{\sqrt{n}}\sum_{j=1}^{d}\!\Big\{\mathbb{E}\big[\big| A_{1j}(\boldsymbol{\theta_0},\boldsymbol{X})\big|\,\big|\,\left|Q_{(m)}\right|< \epsilon\big] \!+\!\mathbb{E}\big[\big| A_{2j}(\boldsymbol{\theta_0},\boldsymbol{X})\big|\,\big|\,\left|Q_{(m)}\right|< \epsilon\big]\Big\},
\end{align}
where
\begin{align*}
\nonumber  A_{1j}(\boldsymbol{\theta_0},\boldsymbol{X}) &= n\sum_{k=1}^d\!\left\lbrace\!\left(\left[I(\boldsymbol{\theta_0})\right]^{\frac{1}{2}}\left[\nabla q(\boldsymbol{\theta_0})\right]^{-1} - K^{-\frac{1}{2}}\right)_{jk}\!\left(q_k\big(\boldsymbol{\hat{\theta}_n(X)}\big)-q_k(\boldsymbol{\theta_0})\right)\!\right\rbrace\\
\nonumber  A_{2j}(\boldsymbol{\theta_0},\boldsymbol{X}) &= \frac{n}{2}\sum_{k=1}^{d}\!\bigg\{\!\!\vphantom{(\left(\sup_{\theta:|\theta-\theta_0|\leq\epsilon}\left|l^{(3)}(\theta;\boldsymbol{X})\right|\right)^2}\left(\left[I(\boldsymbol{\theta_0})\right]^{\frac{1}{2}}\left[\nabla q(\boldsymbol{\theta_0})\right]^{-1}\right)_{jk}\sum_{m=1}^{d}\sum_{l=1}^{d}\!\big(\hat{\theta}_n(\boldsymbol{X})_m - \theta_{0_m}\big)\\
&\quad\times\big(\hat{\theta}_n(\boldsymbol{X})_l - \theta_{0_l}\big) M_{klm}(\boldsymbol{X})\bigg\}.
\end{align*}
\end{theorem}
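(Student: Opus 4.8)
The plan is to insert the random vector $W=\frac{1}{\sqrt n}\sum_{i=1}^n\boldsymbol{\xi}_i$, with $\boldsymbol{\xi}_i=K^{-1/2}\big(g(\boldsymbol{X_i})-q(\boldsymbol{\theta_0})\big)$, between the standardised MLE $T:=\sqrt n[I(\boldsymbol{\theta_0})]^{1/2}(\hat{\boldsymbol\theta}_n-\boldsymbol{\theta_0})$ and $\boldsymbol Z$, and to write
\begin{equation*}
\big|\mathbb{E}[h(T)]-\mathbb{E}[h(\boldsymbol Z)]\big|\le \big|\mathbb{E}[h(T)]-\mathbb{E}[h(W)]\big|+\big|\mathbb{E}[h(W)]-\mathbb{E}[h(\boldsymbol Z)]\big|.
\end{equation*}
A preliminary computation, using $q(\boldsymbol{\theta_0})=\mathbb{E}[g(\boldsymbol{X_1})]$ (forced by \eqref{qproperty} together with consistency of the MLE) and the information identities behind \eqref{Kmatrix_general}, shows that the $\boldsymbol{\xi}_i$ are i.i.d$.$ with $\mathbb{E}[\boldsymbol{\xi}_i]=\boldsymbol 0$ and $\mathrm{Cov}(\boldsymbol{\xi}_i)=I_{d\times d}$; thus $W$ has the first two moments of $\boldsymbol Z$ and is a normalised sum of i.i.d$.$ vectors, exactly the setting for a Stein-type normal approximation.

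The second term is handled by the multivariate normal approximation bound from Stein's method for such sums: solving the Stein equation for $h$, controlling the second derivatives of its solution by $|h|_2$, and running the usual Lindeberg/swapping argument over the $n$ summands yields precisely the first line of \eqref{bounddeltageneral}, the one carrying $|h|_2\,n^{-3/2}$ and the third-order quantities $\mathbb{E}|\xi_{ij}\xi_{ik}\xi_{il}|$ and $|\mathbb{E}[\xi_{ij}\xi_{ik}]|\,\mathbb{E}|\xi_{il}|$.

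The first term is the delta-method error, and here I would condition on the event $G=\{|Q_{(m)}|<\epsilon\}$, on which \emph{every} component of $\hat{\boldsymbol\theta}_n-\boldsymbol{\theta_0}$ has modulus below $\epsilon$ (this is why the truncation is phrased through the maximal deviation $Q_{(m)}$). On $G^{c}$ I bound $|h(T)-h(W)|\le 2\|h\|$ and apply Markov's inequality, $\mathbb{P}(G^{c})\le \epsilon^{-2}\mathbb{E}[Q_{(m)}^2]\le \epsilon^{-2}\mathbb{E}\big[\sum_{j}(\hat{\theta}_{n,j}-\theta_{0_j})^2\big]$, producing the second line of \eqref{bounddeltageneral}. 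On $G$ I use the first-order expansion $|h(T)-h(W)|\le |h|_1\sum_{j}|T_j-W_j|$ together with $\mathbb{E}[\,|A_{\cdot j}|\,\mathbf 1_G\,]\le \mathbb{E}[\,|A_{\cdot j}|\mid G\,]$. The heart of the proof is then the pointwise stochastic expansion of $T_j-W_j$. Linearising the identity $q(\hat{\boldsymbol\theta}_n)=\frac1n\sum_i g(\boldsymbol{X_i})$ from \eqref{qproperty} gives $\hat{\boldsymbol\theta}_n-\boldsymbol{\theta_0}=[\nabla q(\boldsymbol{\theta_0})]^{-1}\big(\frac1n\sum_i g(\boldsymbol{X_i})-q(\boldsymbol{\theta_0})\big)+(\text{second-order remainder})$; the linear part, after the splitting $[I(\boldsymbol{\theta_0})]^{1/2}[\nabla q(\boldsymbol{\theta_0})]^{-1}=K^{-1/2}+\big([I(\boldsymbol{\theta_0})]^{1/2}[\nabla q(\boldsymbol{\theta_0})]^{-1}-K^{-1/2}\big)$, collapses to $W$ plus exactly $n^{-1/2}A_{1j}$, while the second-order remainder, premultiplied by $[I(\boldsymbol{\theta_0})]^{1/2}[\nabla q(\boldsymbol{\theta_0})]^{-1}$ and bounded on $G$ through a Taylor expansion of the likelihood equations whose third-order coefficients are dominated by $M_{klm}(\boldsymbol X)$ under (R.C.2)--(R.C.3), is controlled by $n^{-1/2}A_{2j}$. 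Summing over $j$ and taking expectations gives the third line of \eqref{bounddeltageneral}.

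I expect this last step to be the main obstacle: matching the quadratic remainder to the exact form of $A_{2j}$ requires careful multivariate Taylor bookkeeping and the reconciliation of the delta-method matrix $[I(\boldsymbol{\theta_0})]^{1/2}[\nabla q(\boldsymbol{\theta_0})]^{-1}$ with the third-order log-likelihood bound $M_{klm}$, keeping every intermediate Taylor point inside the $\epsilon_0$-neighbourhood guaranteed by conditioning on $G$. A secondary point needing care is the preliminary identity $\mathrm{Cov}(\boldsymbol{\xi}_i)=I_{d\times d}$, i.e$.$ $\mathrm{Cov}(g(\boldsymbol{X_1}))=K$, on which the clean comparison of $W$ with $\boldsymbol Z$ rests.
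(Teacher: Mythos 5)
Your proposal follows the paper's proof essentially step for step: the same triangle inequality through $W=\frac{1}{\sqrt n}K^{-1/2}\sum_i(g(\boldsymbol{X_i})-q(\boldsymbol{\theta_0}))$, the same moment identities $\mathbb{E}[\boldsymbol{W}]=\boldsymbol{0}$ and $\mathrm{Cov}(\boldsymbol{W})=I_{d\times d}$ (the paper's Lemma \ref{lem2.3}, obtained by matching the CLT for $\frac{1}{\sqrt n}\sum_i g(\boldsymbol{X_i})$ with the delta-method limit of $\sqrt n(q(\boldsymbol{\hat{\theta}_n(X)})-q(\boldsymbol{\theta_0}))$, which gives $K=\mathrm{Cov}(g(\boldsymbol{X_1}))$), the same Stein bound for the $W$-versus-$\boldsymbol Z$ term, and the same treatment of the delta-method error via conditioning on $\{|Q_{(m)}|<\epsilon\}$, Markov's inequality on the complement, a first-order expansion of $h$, and the splitting of $[I(\boldsymbol{\theta_0})]^{1/2}[\nabla q(\boldsymbol{\theta_0})]^{-1}$ to produce $A_{1j}$.

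The one place you diverge is the step you yourself flag as the main obstacle: you propose to control the quadratic remainder "through a Taylor expansion of the likelihood equations whose third-order coefficients are dominated by $M_{klm}$". No such expansion occurs in the paper, and as described it would not produce $A_{2j}$, whose form manifestly involves the Hessian of $q$ rather than third derivatives of $\ell$. The paper instead takes a second-order Taylor expansion of $q_j(\boldsymbol{\hat{\theta}_n(x)})$ about $\boldsymbol{\theta_0}$ directly, so that the remainder is $\frac12\sum_{k,l}(\hat{\theta}_{n,k}-\theta_{0_k})(\hat{\theta}_{n,l}-\theta_{0_l})\frac{\partial^2}{\partial\theta_k\partial\theta_l}q_j(\boldsymbol{\theta_0^*})$, and then bounds $\big|\frac{\partial^2}{\partial\theta_k\partial\theta_l}q_j(\boldsymbol{\theta})\big|\le M_{jkl}(\boldsymbol X)$ locally on the event $|Q_{(m)}|<\epsilon$ (the paper attributes this local bound to (R.C.3); it is a hypothesis on $q$, not something derived from an expansion of the score). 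Replacing your likelihood-equation detour with this direct expansion of $q$ closes the gap and gives $A_{2j}$ exactly.
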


\begin{remark}
\label{remarkdeltamulti} 
\noindent{1.} For fixed $d$, it is clear that the first term is $O(n^{-1/2})$.  For the second term, using \cite{cs68} we know that the mean squared error (MSE) of the MLE is of order $n^{-1}$. This yields, for fixed $d$, $\mathbb{E}\big[\sum_{j=1}^{d}(\hat{\theta}_n(\boldsymbol{X})_j - \theta_{0,j})^2\big] = O(n^{-1})$.  To establish that (\ref{bounddeltageneral}) is order $n^{-1/2}$, it therefore suffices to show that for the third term the quantities $\mathbb{E}\big[\big| A_{1j}(\boldsymbol{\theta_0},\boldsymbol{X})\big|\,\big|\,\left|Q_{(m)}\right|< \epsilon\big]$ and $\mathbb{E}\big[\big| A_{2j}(\boldsymbol{\theta_0},\boldsymbol{X})\big|\,\big|\,\left|Q_{(m)}\right|< \epsilon\big]$ are $O(1)$.  For $A_{2j}(\boldsymbol{\theta_0},\boldsymbol{X})$, we use the Cauchy-Schwarz inequality to get that
\begin{align}\label{CSineq}&\mathbb{E}|\big(\hat{\theta}_n(\boldsymbol{X})_m - \theta_{0_m}\big)\big(\hat{\theta}_n(\boldsymbol{X})_l - \theta_{0_l}\big) M_{jkl}(\boldsymbol{X})|\nonumber \\
&\leq \Big[\mathbb{E}\big[\big(\hat{\theta}_n(\boldsymbol{X})_m - \theta_{0_m}\big)^2\big(\hat{\theta}_n(\boldsymbol{X})_l - \theta_{0_l}\big)^2\big]\Big]^{1/2} \nonumber\\
&\quad\times\Big[\mathbb{E}\big[(M_{klm}(\boldsymbol{X}))^2\,\big|\,|Q_{(m)}|<\epsilon\big]\Big]^{1/2}.
\end{align}
Since from \cite{cs68} the MSE is of order $n^{-1}$, we apply \eqref{CSineq} to the expression of $A_{2j}(\boldsymbol{\theta_0},\boldsymbol{X})$ to get that $\mathbb{E}\big[\big| A_{2j}(\boldsymbol{\theta_0},\boldsymbol{X})\big|\,\big|\,\left|Q_{(m)}\right|< \epsilon\big] = O(1)$.  Whilst we suspect that $\mathbb{E}\big[\big| A_{1j}(\boldsymbol{\theta_0},\boldsymbol{X})\big|\,\big|\,\left|Q_{(m)}\right|< \epsilon\big]$ are generally $O(1)$, we have been unable to establish this fact.  In the univariate $d=1$ case, this term vanishes (see \cite{al15}), and this is also the case in our application in Section 3 (in which case the matrices $I(\boldsymbol{\theta_0})$ and $\nabla q(\boldsymbol{\theta_0})$ are diagonal).

\vspace{3mm}

\noindent{2.} An upper bound on the quantity $|\mathbb{E}[h(\sqrt{n}[I(\boldsymbol{\theta_0})]^{\frac{1}{2}}(\boldsymbol{\hat{\theta}_n(X)} - \boldsymbol{\theta_0}))]-\mathbb{E}[h(\mathbf{Z})]|$ was given in \cite{a15}. A brief comparison of that bound and our bound follows. The bound in \cite{a15} is more general than our bound. It covers the case of independent but not necessarily identically distributed random vectors. In addition, the bound in \cite{a15} is more general in the sense that it is applicable whatever the form of the MLE is; our bound can be used only when \eqref{qproperty} is satisfied. Furthermore, \cite{a15} gives bounds even for cases where the MLE cannot be expressed analytically. On the other hand, our bound is preferable when the MLE has a representation of the form (\ref{qproperty}). Our bound in \eqref{bounddeltageneral} applies to a wider class of test functions $h$; our class is $C_b^2(\mathbb{R}^d)$, as opposed to $C_b^3(\mathbb{R}^d)$, and has a  better dependence on the dimension $d$; for fixed $n$, our bound is $O(d^4)$, whereas the bound of \cite{a15} is $O(d^6)$. Moreover, in situations where the MLE is already a sum of independent terms our bound is easier to use since if we apply $q(\boldsymbol{x}) = \boldsymbol{x}$ to \eqref{remaindertermsR1R2} yields $R_2 = 0$ in \eqref{trianglefirststep}. This means that our bound \eqref{bounddeltageneral} simplifies to
\begin{align*}
&\left|\mathbb{E}\left[h\left(\sqrt{n}\left[I(\boldsymbol{\theta_0})\right]^{\frac{1}{2}}\left(\boldsymbol{\hat{\theta}_n(X)} - \boldsymbol{\theta_0}\right)\right)\right]- \mathbb{E}[h(\boldsymbol{Z})]\right| \\
&\quad \leq \frac{\sqrt{\pi}|h|_2}{4\sqrt{2}n^{3/2}}\sum_{i=1}^n\sum_{j,k,l=1}^d\Big\{\mathbb{E}\left|\xi_{ij}\xi_{ik}\xi_{il}\right|+2\left|\mathbb{E}\left[\xi_{ij}\xi_{ik}\right]\right|\mathbb{E}\left|\xi_{il}\right|\Big\}.
\end{align*}
Another simplification of the general bound is given in Section 3, where $X_1,X_2,\ldots,X_n$ are taken to be i.i.d$.$ normal random variables with unknown mean and variance, which can also be treated by \cite{a15}. The simpler terms in our bound lead to a much improved bound for this example (see Remark \ref{remark3.2} for more details).

\vspace{3mm}

\noindent{3.} If for all $k,m,l \in \left\lbrace 1,2,\ldots,d \right\rbrace$, $\big|\frac{\partial^2}{\partial\theta_m\partial\theta_l}q_k(\boldsymbol{\theta})\big|$ is uniformly bounded in $\Theta$, then we do not need to use $\epsilon$ or conditional expectations in deriving an upper bound (we essentially apply Theorem \ref{Theorem_delta_multi} with $\epsilon\rightarrow\infty$). This leads to the following simpler bound:
\begin{align}
\label{uniformlyboundedq}
\nonumber & \left|\mathbb{E}\left[h\left(\sqrt{n}\left[I(\boldsymbol{\theta_0})\right]^{\frac{1}{2}}\left(\boldsymbol{\hat{\theta}_n(X)} - \boldsymbol{\theta_0}\right)\right)\right]- \mathbb{E}[h(\boldsymbol{Z})]\right|\\
 &\quad \leq \frac{\sqrt{\pi}|h|_2}{4\sqrt{2}n^{3/2}}\sum_{i=1}^n\sum_{j,k,l=1}^d\Big\{\mathbb{E}|\xi_{ij}\xi_{ik}\xi_{il}|+2|\mathbb{E}[\xi_{ij}\xi_{ik}]|\mathbb{E}|\xi_{il}|\Big\} \nonumber\\
 &\quad\quad+\frac{|h|_1}{\sqrt{n}}\sum_{j=1}^{d}\Big\{\mathbb{E}\big| A_{1j}(\boldsymbol{\theta_0},\boldsymbol{X})\big| +\mathbb{E}\big| A_{2j}(\boldsymbol{\theta_0},\boldsymbol{X})\big|\Big\}.
 \end{align}
\end{remark}

To prove Theorem \ref{Theorem_delta_multi}, we employ Stein's method and the multivariate delta method. Our strategy consists of benefiting from the special form of $q\big(\boldsymbol{\hat{\theta}_n(X)}\big)$, which is a sum of random vectors. It is here where multivariate delta method comes into play; instead of comparing $\boldsymbol{\hat{\theta}_n(X)}$ to $\boldsymbol{Z}\sim {\mathrm {N}}_d(\boldsymbol{0},I_{d \times d})$, we compare $q\big(\boldsymbol{\hat{\theta}_n(X)}\big)$ to $\boldsymbol{Z}$ and then find upper bounds on the distributional distance between the distribution of $\boldsymbol{\hat{\theta}_n(X)}$ and $q\big(\boldsymbol{\hat{\theta}_n(X)}\big)$. As well as recalling the multivariate delta method, we state and prove two lemmas that will be used in the proof of Theorem \ref{Theorem_delta_multi}.  

\begin{theorem}\label{thm2.2} \textbf{(Multivariate delta method)}
Let the parameter $\boldsymbol{\theta} \in \Theta$, where $\Theta \subset \mathbb{R}^d$. Furthermore, let $Y_n$ be a sequence of random vectors in $\mathbb{R}^d$ which, for $\boldsymbol{Z} \sim {\mathrm {N}}_d(\boldsymbol{0},I_{d \times d})$, satisfies
\begin{equation}
\nonumber \sqrt{n}\left(Y_n - \boldsymbol{\theta}\right) \xrightarrow[{n \to \infty}]{{\rm d}} \Sigma^{\frac{1}{2}}\boldsymbol{Z},
\end{equation}
where $\Sigma$ is a symmetric, positive definite covariance matrix. Then, for any function $b:\mathbb{R}^d \rightarrow \mathbb{R}^d$ of which the Jacobian matrix $\nabla b(\boldsymbol{\theta}) \in \mathbb{R}^{d \times d}$ exists and is invertible, we have that for $\Lambda := \left[\nabla b(\boldsymbol{\theta})\right]\Sigma\left[\nabla b(\boldsymbol{\theta})\right]^{\intercal}$,
\begin{equation*}
\sqrt{n}\left(b\left(Y_n\right) - b(\boldsymbol{\theta})\right) \xrightarrow[{n \to \infty}]{{\rm d}} \Lambda^{\frac{1}{2}}\boldsymbol{Z}.
\end{equation*}
\end{theorem}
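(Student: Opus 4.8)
The plan is to reduce the nonlinear map $b$ to its linearisation at $\boldsymbol{\theta}$ via a first-order Taylor expansion, and then to pass to the limit using the continuous mapping theorem together with Slutsky's lemma. First I would observe that the hypothesis $\sqrt{n}(Y_n-\boldsymbol{\theta})\xrightarrow{\mathrm{d}}\Sigma^{1/2}\boldsymbol{Z}$ implies $\sqrt{n}(Y_n-\boldsymbol{\theta})=O_{\mathrm{P}}(1)$, whence $Y_n-\boldsymbol{\theta}=O_{\mathrm{P}}(n^{-1/2})$ and in particular $Y_n\xrightarrow{\mathrm{P}}\boldsymbol{\theta}$. This consistency guarantees that, with probability tending to one, $Y_n$ lies in a neighbourhood of $\boldsymbol{\theta}$ on which $b$ is differentiable, which is what licenses the local expansion.

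Next, by differentiability of $b$ at $\boldsymbol{\theta}$ I would write
\begin{equation*}
\sqrt{n}\big(b(Y_n)-b(\boldsymbol{\theta})\big)=\left[\nabla b(\boldsymbol{\theta})\right]\sqrt{n}(Y_n-\boldsymbol{\theta})+\sqrt{n}\,R_n,
\end{equation*}
where the remainder obeys $\|R_n\|=o(\|Y_n-\boldsymbol{\theta}\|)$ as $Y_n\to\boldsymbol{\theta}$. The key estimate is the asymptotic negligibility of the remainder: combining $\sqrt{n}(Y_n-\boldsymbol{\theta})=O_{\mathrm{P}}(1)$ with $\|R_n\|=o(\|Y_n-\boldsymbol{\theta}\|)$ gives $\sqrt{n}\,R_n=o_{\mathrm{P}}(1)$, i.e.\ $\sqrt{n}\,R_n\xrightarrow{\mathrm{P}}\boldsymbol{0}$.

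Finally, the continuous mapping theorem applied to the continuous linear map $\boldsymbol{v}\mapsto[\nabla b(\boldsymbol{\theta})]\boldsymbol{v}$ yields $[\nabla b(\boldsymbol{\theta})]\sqrt{n}(Y_n-\boldsymbol{\theta})\xrightarrow{\mathrm{d}}[\nabla b(\boldsymbol{\theta})]\Sigma^{1/2}\boldsymbol{Z}$, and Slutsky's lemma then gives $\sqrt{n}(b(Y_n)-b(\boldsymbol{\theta}))\xrightarrow{\mathrm{d}}[\nabla b(\boldsymbol{\theta})]\Sigma^{1/2}\boldsymbol{Z}$. To identify this limit with $\Lambda^{1/2}\boldsymbol{Z}$, I would note that both are centred Gaussian vectors and match their covariances: since the principal square root $\Sigma^{1/2}$ is symmetric and $\mathrm{Cov}(\boldsymbol{Z})=I_{d\times d}$, the limit $[\nabla b(\boldsymbol{\theta})]\Sigma^{1/2}\boldsymbol{Z}$ has covariance $[\nabla b(\boldsymbol{\theta})]\Sigma^{1/2}(\Sigma^{1/2})^{\intercal}[\nabla b(\boldsymbol{\theta})]^{\intercal}=[\nabla b(\boldsymbol{\theta})]\Sigma[\nabla b(\boldsymbol{\theta})]^{\intercal}=\Lambda$, which matches $\Lambda^{1/2}(\Lambda^{1/2})^{\intercal}=\Lambda$; two centred Gaussians with equal covariance coincide in law. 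Here invertibility of $\nabla b(\boldsymbol{\theta})$ is used to ensure $\Lambda$ is positive definite, so that $\Lambda^{1/2}$ is well defined and the limit is nondegenerate. I expect the main obstacle to be the rigorous control of $\sqrt{n}\,R_n$, which requires marrying the purely analytic $o(\|Y_n-\boldsymbol{\theta}\|)$ bound coming from differentiability to the probabilistic statement that $\sqrt{n}(Y_n-\boldsymbol{\theta})$ is stochastically bounded.
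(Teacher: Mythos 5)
Your proof is correct and is the standard textbook argument for the multivariate delta method (Taylor expansion at $\boldsymbol{\theta}$, tightness of $\sqrt{n}(Y_n-\boldsymbol{\theta})$ to kill the remainder, continuous mapping plus Slutsky, then identification of the Gaussian limit by matching covariances). The paper itself states Theorem \ref{thm2.2} without proof, merely recalling it as a classical result, so there is no alternative argument to compare against; your write-up supplies exactly the expected derivation.
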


\begin{lemma}\label{lem2.3}Let
\begin{align}
\label{W}
\boldsymbol{W} = \frac{K^{-\frac{1}{2}}}{\sqrt{n}}\sum_{i=1}^{n}\left[g(\boldsymbol{X_i}) - q(\boldsymbol{\theta_0})\right],
\end{align}
where $K$ is as in \eqref{Kmatrix_general}. Then $\mathbb{E}\mathbf{W}=\mathbf{0}$ and $\mathrm{Cov}(\mathbf{W})=I_{d\times d}$.
\end{lemma}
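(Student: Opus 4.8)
The plan is to exploit the i.i.d$.$ structure to reduce each of the two claims to a single deterministic identity for one summand, and then to establish those identities by passing to the limit. Writing $\boldsymbol{W}$ as in \eqref{W}, the summands $g(\boldsymbol{X_i}) - q(\boldsymbol{\theta_0})$ are i.i.d$.$ and $K^{-\frac12}$ is a fixed matrix, so everything follows once I show the two identities $\mathbb{E}_{\boldsymbol{\theta_0}}[g(\boldsymbol{X_1})] = q(\boldsymbol{\theta_0})$ and $\mathrm{Cov}(g(\boldsymbol{X_1})) = K$.

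For the mean, linearity gives $\mathbb{E}\boldsymbol{W} = \sqrt{n}\,K^{-\frac12}\big(\mathbb{E}_{\boldsymbol{\theta_0}}[g(\boldsymbol{X_1})] - q(\boldsymbol{\theta_0})\big)$, so it suffices to prove the first identity. I would obtain it from \eqref{qproperty} by a consistency argument: under (R.C.1)--(R.C.5) the MLE is consistent, so $\boldsymbol{\hat{\theta}_n(X)} \to \boldsymbol{\theta_0}$ almost surely and, by continuity of $q$, the left-hand side of \eqref{qproperty} satisfies $q(\boldsymbol{\hat{\theta}_n(X)}) \to q(\boldsymbol{\theta_0})$; meanwhile the strong law of large numbers gives $\frac1n\sum_{i=1}^n g(\boldsymbol{X_i}) \to \mathbb{E}_{\boldsymbol{\theta_0}}[g(\boldsymbol{X_1})]$. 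As the two sides of \eqref{qproperty} agree for every $n$, their almost-sure limits must coincide, which forces the deterministic identity $\mathbb{E}_{\boldsymbol{\theta_0}}[g(\boldsymbol{X_1})] = q(\boldsymbol{\theta_0})$ and hence $\mathbb{E}\boldsymbol{W} = \boldsymbol{0}$.

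For the covariance, the i.i.d$.$ assumption and the symmetry of $K^{-\frac12}$ give $\mathrm{Cov}(\boldsymbol{W}) = K^{-\frac12}\,\mathrm{Cov}(g(\boldsymbol{X_1}))\,K^{-\frac12}$, so it remains to show $\mathrm{Cov}(g(\boldsymbol{X_1})) = K$. I would identify the limiting law of $\sqrt{n}\big(q(\boldsymbol{\hat{\theta}_n(X)}) - q(\boldsymbol{\theta_0})\big)$ in two independent ways. First, applying the multivariate delta method (Theorem \ref{thm2.2}) with $b = q$ to the asymptotic normality of Theorem \ref{Theorem_asymptotic_MLE_delta}, equivalently $\sqrt{n}(\boldsymbol{\hat{\theta}_n(X)} - \boldsymbol{\theta_0}) \xrightarrow{\mathrm d} \mathrm N_d(\boldsymbol 0, [I(\boldsymbol{\theta_0})]^{-1})$, yields the limit $\mathrm N_d\big(\boldsymbol 0, [\nabla q(\boldsymbol{\theta_0})][I(\boldsymbol{\theta_0})]^{-1}[\nabla q(\boldsymbol{\theta_0})]^{\intercal}\big) = \mathrm N_d(\boldsymbol 0, K)$ by \eqref{Kmatrix_general}. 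Second, using \eqref{qproperty} to write $q(\boldsymbol{\hat{\theta}_n(X)}) = \frac1n\sum_{i=1}^n g(\boldsymbol{X_i})$ and applying the multivariate central limit theorem (together with $\mathbb{E}_{\boldsymbol{\theta_0}}[g(\boldsymbol{X_1})] = q(\boldsymbol{\theta_0})$ from the previous step) yields the limit $\mathrm N_d(\boldsymbol 0, \mathrm{Cov}(g(\boldsymbol{X_1})))$. Since these two sequences are the same random vector and a limiting distribution is unique, the two centred Gaussians coincide, forcing $\mathrm{Cov}(g(\boldsymbol{X_1})) = K$, and therefore $\mathrm{Cov}(\boldsymbol{W}) = K^{-\frac12} K K^{-\frac12} = I_{d\times d}$.

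I expect the main obstacle to be verifying the hypotheses that license these limiting arguments, rather than the algebra. The central limit theorem for $\frac1n\sum_{i=1}^n g(\boldsymbol{X_i})$ requires $g(\boldsymbol{X_1})$ to have finite second moments, and the delta-method limit requires $\nabla q(\boldsymbol{\theta_0})$ to be invertible so that $K$ in \eqref{Kmatrix_general} is symmetric positive definite and $K^{-\frac12}$ is well defined; invertibility is in any case implicit in the hypotheses of Theorem \ref{Theorem_delta_multi}. Finiteness of the second moments of $g(\boldsymbol{X_1})$ is consistent with (R.C.5) and with the $O(n^{-1})$ mean-squared-error bound of \cite{cs68} used in Remark \ref{remarkdeltamulti}. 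Once these conditions are confirmed, the comparison of the two Gaussian limits delivers both identities and completes the proof.
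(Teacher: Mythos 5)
Your proposal is correct and follows essentially the same route as the paper: both arguments identify the limit of $\sqrt{n}\big(q(\boldsymbol{\hat{\theta}_n(X)})-q(\boldsymbol{\theta_0})\big)$ in two ways, via the multivariate CLT and via the delta method applied to Theorem \ref{Theorem_asymptotic_MLE_delta}, and conclude $\mathrm{Cov}(g(\boldsymbol{X_1}))=K$ and $\mathbb{E}g(\boldsymbol{X_1})=q(\boldsymbol{\theta_0})$ by matching the limits. Your explicit consistency-plus-SLLN justification of the mean identity is a slightly more detailed version of a step the paper states without elaboration, but it is not a different approach.
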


\begin{proof}By the multivariate central limit theorem,
\begin{equation}\label{eqn11}\frac{1}{\sqrt{n}}\sum_{i=1}^n(g(\mathbf{X}_i)-\mathbb{E}g(\mathbf{X}_1))\xrightarrow[{n \to \infty}]{{\rm d}}\mathrm{N}_d(\mathbf{0},\mathrm{Cov}(g(\mathbf{X}_1)), 
\end{equation}
and, by Theorem \ref{thm2.2},
\begin{equation}\label{eqn22}\sqrt{n}\left(q\left(\boldsymbol{\hat{\theta}_n(X)}\right)-q(\boldsymbol{\theta_0})\right)\xrightarrow[{n \to \infty}]{{\rm d}}\mathrm{N}_d(\mathbf{0},K).
\end{equation}
Since $q(\boldsymbol{\hat{\theta}_n(X)}) = \frac{1}{n}\sum_{i=1}^ng(\mathbf{X}_i)$ and $\boldsymbol{X}_1,\boldsymbol{X}_2,\ldots,\boldsymbol{X}_n$ are i.i.d$.$ random vectors, it follows that $\mathbb{E}q(\boldsymbol{\theta_0})=\mathbb{E}g(\mathbf{X}_1)$. Furthermore, $K=\mathrm{Cov}(g(\mathbf{X}_1))$ because the limiting multivariate normal distributions in (\ref{eqn11}) and (\ref{eqn22}) must have the same mean and covariance.  It is now immediate that $\mathbb{E}\mathbf{W}=\mathbf{0}$, and a simple calculation using that $K=\mathrm{Cov}(g(\mathbf{X}_1))$ yields that $\mathrm{Cov}(\mathbf{W})=I_{d\times d}$.
\end{proof}

\begin{lemma}
\label{lemma_Stein_Delta}
Suppose that $X_{1,j},\ldots,X_{n,j}$ are independent for a fixed $j$, but that the random variables $X_{i,1},\ldots,X_{i,d}$ may be dependent for any fixed $i$.  For $j=1,\ldots,d$, let $W_j=\frac{1}{\sqrt{n}}\sum_{i=1}^nX_{ij}$ and denote $\mathbf{W}=(W_1,\ldots,W_d)^\intercal$.  Suppose that $\mathbb{E}\left[\mathbf{W}\right]=\mathbf{0}$ and $\mathrm{Cov}(\mathbf{W})=I_{d\times d}$. Then, for all $h\in C_b^2(\mathbb{R}^d)$,
\begin{align}
\label{steinbound}
\left|\mathbb{E}\left[h(\mathbf{W})\right]-\mathbb{E}\left[h(\mathbf{Z})\right]\right|&\leq\frac{\sqrt{\pi}|h|_2}{4\sqrt{2}n^{3/2}}\sum_{i=1}^n\sum_{j,k,l=1}^d\Big\{\mathbb{E}\left|X_{ij}X_{ik}X_{il}\right|\nonumber\\
&\quad+2\left|\mathbb{E}\left[X_{ij}X_{ik}\right]\right|\mathbb{E}\left|X_{il}\right|\Big\}.
\end{align}
\end{lemma}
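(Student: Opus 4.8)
The plan is to apply Stein's method for the $d$-dimensional standard normal distribution. Its characterising operator is $\mathcal{A}f(\mathbf{w}) = \sum_{j=1}^d\big(\tfrac{\partial^2 f}{\partial w_j^2}(\mathbf{w}) - w_j\tfrac{\partial f}{\partial w_j}(\mathbf{w})\big)$, and for a given $h\in C_b^2(\mathbb{R}^d)$ I would let $f$ solve the Stein equation $\mathcal{A}f(\mathbf{w}) = h(\mathbf{w}) - \mathbb{E}[h(\mathbf{Z})]$ via the Ornstein--Uhlenbeck semigroup,
\[
f(\mathbf{w}) = -\int_0^\infty\big(\mathbb{E}\big[h\big(e^{-s}\mathbf{w}+\sqrt{1-e^{-2s}}\,\mathbf{Z}\big)\big] - \mathbb{E}[h(\mathbf{Z})]\big)\,ds .
\]
Since the left-hand side of \eqref{steinbound} equals $|\mathbb{E}[\mathcal{A}f(\mathbf{W})]|$, the whole problem reduces to bounding $\mathbb{E}[\mathcal{A}f(\mathbf{W})]$, and the input required is a bound on the third-order derivatives of $f$ in terms of $|h|_2$.

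Obtaining that derivative bound is, I expect, the main obstacle, since $h$ is only twice differentiable. Differentiating under the integral gives $\tfrac{\partial^2 f}{\partial w_j\partial w_k}(\mathbf{w}) = -\int_0^\infty e^{-2s}\mathbb{E}[(\partial_j\partial_k h)(\cdots)]\,ds$, but a naive third differentiation would demand $h\in C^3$. To circumvent this I would write the inner Gaussian expectation as an integral against the standard normal density, change variables so that $\mathbf{w}$ enters only through the density, and differentiate the density; this Gaussian integration-by-parts moves the extra derivative off $h$ and onto a factor $Z_l$, giving
\[
\frac{\partial^3 f}{\partial w_j\partial w_k\partial w_l}(\mathbf{w}) = -\int_0^\infty\frac{e^{-3s}}{\sqrt{1-e^{-2s}}}\,\mathbb{E}\big[Z_l\,(\partial_j\partial_k h)(\cdots)\big]\,ds .
\]
Bounding $|(\partial_j\partial_k h)|\le|h|_2$, using $\mathbb{E}|Z_l|=\sqrt{2/\pi}$, and evaluating $\int_0^\infty e^{-3s}(1-e^{-2s})^{-1/2}\,ds=\pi/4$ (substitute $u=e^{-2s}$ and recognise a Beta integral) then yields $\big\|\tfrac{\partial^3 f}{\partial w_j\partial w_k\partial w_l}\big\|\le\frac{\sqrt\pi}{2\sqrt2}|h|_2$. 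The integrable singularity at $s=0$ is precisely what makes the class $C_b^2$ (rather than $C_b^3$) accessible.

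With the derivative bound in hand, the remainder is a leave-one-out expansion. Writing $\mathbf{X}_i=(X_{i1},\ldots,X_{id})^\intercal$ and $\mathbf{W}^{(i)}=\mathbf{W}-n^{-1/2}\mathbf{X}_i$, I would expand $\tfrac{\partial f}{\partial w_j}(\mathbf{W})$ about $\mathbf{W}^{(i)}$ to second order and substitute into $\sum_j\mathbb{E}[W_j\tfrac{\partial f}{\partial w_j}(\mathbf{W})]=n^{-1/2}\sum_{i,j}\mathbb{E}[X_{ij}\tfrac{\partial f}{\partial w_j}(\mathbf{W})]$. Since $\mathbf{X}_i$ is independent of $\mathbf{W}^{(i)}$ and $\mathbb{E}[X_{ij}]=0$, the zeroth-order term vanishes and the first-order term becomes $n^{-1}\sum_{i,j,k}\mathbb{E}[X_{ij}X_{ik}]\mathbb{E}[\partial_j\partial_k f(\mathbf{W}^{(i)})]$, leaving a third-order Lagrange remainder. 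Separately, $\mathrm{Cov}(\mathbf{W})=I_{d\times d}$ together with across-$i$ independence gives $n^{-1}\sum_i\mathbb{E}[X_{ij}X_{ik}]=\delta_{jk}$, so $\sum_j\mathbb{E}[\partial_j^2 f(\mathbf{W})]=n^{-1}\sum_{i,j,k}\mathbb{E}[X_{ij}X_{ik}]\mathbb{E}[\partial_j\partial_k f(\mathbf{W})]$; expanding $\partial_j\partial_k f(\mathbf{W})$ about $\mathbf{W}^{(i)}$ to first order matches this against the first-order term and produces a second remainder. Collecting the two remainders gives
\[
\mathbb{E}[\mathcal{A}f(\mathbf{W})]=\frac{1}{n^{3/2}}\sum_{i,j,k,l}\mathbb{E}[X_{ij}X_{ik}]\mathbb{E}[X_{il}\,\partial_j\partial_k\partial_l f(\widehat{\mathbf{W}}_i)]-\frac{1}{2n^{3/2}}\sum_{i,j,k,l}\mathbb{E}[X_{ij}X_{ik}X_{il}\,\partial_j\partial_k\partial_l f(\widetilde{\mathbf{W}}_i)].
\]
Bounding each third derivative by $\frac{\sqrt\pi}{2\sqrt2}|h|_2$ and pulling $|X_{il}|$ out of the first expectation, the $\tfrac12$ from the Lagrange remainder yields both the coefficient $2$ on the covariance term and the overall constant $\frac{\sqrt\pi}{4\sqrt2}$, reproducing \eqref{steinbound} exactly. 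The only structural facts about $\mathbf{W}$ used beyond $\mathrm{Cov}(\mathbf{W})=I_{d\times d}$ are the componentwise centering $\mathbb{E}[X_{ij}]=0$ and the independence of the rows across $i$.
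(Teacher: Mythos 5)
Your proposal is correct and follows essentially the same route as the paper: the paper's proof simply cites Lemma 2.1 of Gaunt and Reinert (2016) for the leave-one-out Stein expansion that bounds $\left|\mathbb{E}[h(\mathbf{W})]-\mathbb{E}[h(\mathbf{Z})]\right|$ in terms of $\big\|\frac{\partial^3 f}{\partial x_{i_1}\partial x_{i_2}\partial x_{i_3}}\big\|$, and Proposition 2.1 of Gaunt (2016) for the bound $\big\|\frac{\partial^3 f}{\partial x_{i_1}\partial x_{i_2}\partial x_{i_3}}\big\|\leq\frac{\sqrt{\pi}}{2\sqrt{2}}|h|_2$. You have reproved both cited ingredients from first principles (the Ornstein--Uhlenbeck representation with Gaussian integration by parts, and the Taylor expansion about $\mathbf{W}^{(i)}$), and your constants and structure match the paper's argument exactly.
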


\begin{proof} The following bound follows immediately from Lemma 2.1 of \cite{gaunt_friedman}:
\begin{align}
\label{springz}\left|\mathbb{E}\left[h(\mathbf{W})\right]-\mathbb{E}\left[h(\mathbf{Z})\right]\right|
&\leq\frac{1}{2n^{3/2}}\bigg\|\frac{\partial^3f(\mathbf{x})}{\partial x_{i_1}\partial x_{i_2}\partial x_{i_3}}\bigg\|\sum_{i=1}^n\sum_{j,k,l=1}^d\Big\{\mathbb{E}\left|X_{ij}X_{ik}X_{il}\right|\nonumber\\
&\quad+2\left|\mathbb{E}\left[X_{ij}X_{ik}\right]\right|\mathbb{E}\left|X_{il}\right|\Big\},
\end{align}
where $f$ solves the so-called Stein equation
\begin{equation}
\nonumber \nabla^\intercal\nabla f(\mathbf{w})-\mathbf{w}^\intercal\nabla f(\mathbf{w})=h(\mathbf{w})-\mathbb{E}\left[h(\mathbf{Z})\right].
\end{equation} 
From Proposition 2.1 of \cite{gaunt_rate}, we have the bound 
\[\bigg\|\frac{\partial^3f(\mathbf{x})}{\partial x_{i_1}\partial x_{i_2}\partial x_{i_3}}\bigg\|\leq\frac{\sqrt{\pi}}{2\sqrt{2}}|h|_2,\]
 which when applied to (\ref{springz}) yields (\ref{steinbound}).
\end{proof}
 
\noindent{\emph{Proof of Theorem \ref{Theorem_delta_multi}.}} The triangle inequality yields
\begin{equation}
\label{trianglefirststep}
\left|\mathbb{E}\left[ h\left(\sqrt{n}[I(\boldsymbol{\theta_0})]^{\frac{1}{2}}\left(\boldsymbol{\hat{\theta}_n(X)} - \boldsymbol{\theta_0}\right)\right)\right] - \mathbb{E}[h(\boldsymbol{Z})]\right|  \leq R_1+R_2,
\end{equation}
where
\begin{align}
\label{remaindertermsR1R2}
\nonumber R_1&\leq\left|\mathbb{E}\left[ h\left(\sqrt{n}K^{-\frac{1}{2}}\left(q\big(\boldsymbol{\hat{\theta}_n(X)}\big) - q\left(\boldsymbol{\theta_0}\right)\right) \right) \right] - \mathbb{E}[h(\boldsymbol{Z})]\right|,\\
R_2&\leq\Big|\mathbb{E}\Big[h\left(\sqrt{n}[I(\boldsymbol{\theta_0})]^{\frac{1}{2}}\left(\boldsymbol{\hat{\theta}_n(X)} -\boldsymbol{\theta_0}\right)\right) \nonumber \\
&\quad - h\left(\sqrt{n}K^{-\frac{1}{2}}\left(q\big(\boldsymbol{\hat{\theta}_n(X)}\big) -q\left(\boldsymbol{\theta_0}\right)\right)\right)\Big]\Big|.
\end{align}
The rest of the proof focuses on bounding the terms $R_1$ and $R_2$.

\vspace{3mm}

\noindent{\textbf{Step 1: Upper bound for} $R_1$}. For ease of presentation, we denote by 
\begin{align}
\nonumber \boldsymbol{W} = \sqrt{n}K^{-\frac{1}{2}}\left(q(\boldsymbol{\hat{\theta}_n(X)}) - q(\boldsymbol{\theta_0})\right) = \frac{K^{-\frac{1}{2}}}{\sqrt{n}}\sum_{i=1}^{n}\left[g(\boldsymbol{X_i}) - q(\boldsymbol{\theta_0})\right],
\end{align}
as in \eqref{W}. Thus, $\boldsymbol{W} = (W_1,W_2,\ldots,W_d)^{\intercal}$ with
\begin{equation}
\nonumber W_j = \frac{1}{\sqrt{n}}\sum_{i=1}^{n}\sum_{l=1}^{d}\big[K^{-\frac{1}{2}}\big]_{jl}\left(g_{l}(\boldsymbol{X_i})-q_{l}(\boldsymbol{\theta_0})\right) = \frac{1}{\sqrt{n}}\sum_{i=1}^{n}\xi_{ij}, \:\:\forall j \in \left\lbrace 1,\ldots,d \right\rbrace,
\end{equation}
where $\xi_{ij} = \sum_{l=1}^{d}\big[K^{-\frac{1}{2}}\big]_{jl}\left(g_l(\boldsymbol{X_i}) - q_l(\boldsymbol{\theta_0})\right)$
are independent random variables, since the $\boldsymbol{X_i}$ are assumed to be independent. Therefore, $W_j$ can be expressed as a sum of independent terms.  This together with Lemma \ref{lem2.3} ensures that the assumptions of Lemma \ref{lemma_Stein_Delta} are satisfied. Therefore, \eqref{steinbound} yields
\begin{align*}
R_1 &= \left|\mathbb{E}[h(\boldsymbol{W})] - \mathbb{E}[h(\boldsymbol{Z})]\right| \\
&\leq \frac{\sqrt{\pi}|h|_2}{4\sqrt{2}n^{3/2}}\sum_{i=1}^n\sum_{j,k,l=1}^d\Big\{\mathbb{E}|\xi_{ij}\xi_{ik}\xi_{il}|+2|\mathbb{E}[\xi_{ij}\xi_{ik}|\mathbb{E}|\xi_{il}|\Big\}.
\end{align*}
\vspace{0.05in}
\\
\textbf{Step 2: Upper bound for} $R_2$. For ease of presentation, we let
\begin{align*}
C_1:=C_1(h,q,\boldsymbol{X},\boldsymbol{\theta_0})&:=h\left(\sqrt{n}[I(\boldsymbol{\theta_0})]^{\frac{1}{2}}\left(\boldsymbol{\hat{\theta}_n(X)} -\boldsymbol{\theta_0}\right)\right)\\
&\quad - h\left(\sqrt{n}K^{-\frac{1}{2}}\left(q\big(\boldsymbol{\hat{\theta}_n(X)}\big) -q\left(\boldsymbol{\theta_0}\right)\right)\right),
\end{align*}
and our aim is to find an upper bound for $|\mathbb{E}[C_1]|$. Let us denote by $[A]_{[j]}$ the $j^{{\rm th}}$ row of a matrix $A$.  We begin by using a first order Taylor expansion to obtain
\begin{align}
\nonumber h\left(\sqrt{n}[I(\boldsymbol{\theta_0})]^{\frac{1}{2}}\left(\boldsymbol{\hat{\theta}_n(x)} -\boldsymbol{\theta_0}\right)\right)& = h\left(\sqrt{n}K^{-\frac{1}{2}}\left(q\big(\boldsymbol{\hat{\theta}_n(X)}\big) -q\left(\boldsymbol{\theta_0}\right)\right)\right)\\
\nonumber & \!\!\!\!\!\!\!\!\!\!\!\!\!\!\!\!\!\!\!\!\!\!\!\!+ \sum_{j=1}^{d}\Big\{ \sqrt{n}[[I(\boldsymbol{\theta_0})]^{\frac{1}{2}}]_{[j]}\left(\boldsymbol{\hat{\theta}_n(x)} - \boldsymbol{\theta_0}\right) \nonumber \\
&\!\!\!\!\!\!\!\!\!\!\!\!\!\!\!\!\!\!\!\!\!\!\!\! - \sqrt{n}\big[K^{-\frac{1}{2}}\big]_{[j]}\left(q\big(\boldsymbol{\hat{\theta}_n(X)}\big)-q(\boldsymbol{\theta_0})\right)\Big\}\frac{\partial}{\partial x_j}h(t(\boldsymbol{x})),\nonumber
\end{align}
where $t(\boldsymbol{x})$ is between $\sqrt{n}[I(\boldsymbol{\theta_0})]^{\frac{1}{2}}\big(\boldsymbol{\hat{\theta}_n(x)} -\boldsymbol{\theta_0}\big)$ and $\sqrt{n}K^{-\frac{1}{2}}\big(q\big(\boldsymbol{\hat{\theta}_n(X)}\big) -q\left(\boldsymbol{\theta_0}\right)\big)$. Rearranging the terms gives
\begin{align}
\label{boundC1}
|C_1|& \leq |h|_1\bigg|\sum_{j=1}^{d}\sqrt{n}\left[\left[I(\boldsymbol{\theta_0})\right]^{\frac{1}{2}}\right]_{[j]}\left(\boldsymbol{\hat{\theta}_n(x)} - \boldsymbol{\theta_0}\right)\nonumber\\
&\quad - \sqrt{n}\big[K^{-\frac{1}{2}}\big]_{[j]}\left(q\big(\boldsymbol{\hat{\theta}_n(X)}\big) - q(\boldsymbol{\theta_0})\right)\bigg|.
\end{align}
For $\boldsymbol{\theta_0^*}$ between $\boldsymbol{\hat{\theta}_n(x)}$ and $\boldsymbol{\theta_0}$, a second order Taylor expansion of $q_j\big(\boldsymbol{\hat{\theta}_n(x)}\big)$ about $\boldsymbol{\theta_0}$ gives that, for all $ j \in \left\lbrace 1,2,\ldots,d \right\rbrace$,
\begin{align}
\nonumber  q_j\big(\boldsymbol{\hat{\theta}_n(x)}\big) &= q_j(\boldsymbol{\theta_0}) + \sum_{k=1}^{d}\left(\hat{\theta}_n(\boldsymbol{x})_k - \theta_{0_k}\right)\frac{\partial}{\partial \theta_k}q_j(\boldsymbol{\theta_0})\\
\nonumber & \quad+ \frac{1}{2}\sum_{k=1}^{d}\sum_{l=1}^{d}\left(\hat{\theta}_n(\boldsymbol{x})_k - \theta_{0_k}\right)\left(\hat{\theta}_n(\boldsymbol{x})_l - \theta_{0_l}\right)\frac{\partial^2}{\partial\theta_k\partial\theta_l}q_j\left(\boldsymbol{\theta_0^*}\right)
\end{align}
and after a small rearrangement of the terms we get that
\begin{align*}
\nonumber  [\nabla q(\boldsymbol{\theta_0})]_{[j]}\big(\boldsymbol{\hat{\theta}_n(x)} - \boldsymbol{\theta_0}\big)&= q_j\big(\boldsymbol{\hat{\theta}_n(x)}\big) - q_j(\boldsymbol{\theta_0})\\
&\quad\!\!\!\!\!\!\!\!\!\!\!\!\!\!\!\!\!\!\!\!\!\!\!\! - \frac{1}{2}\sum_{k=1}^{d}\sum_{l=1}^{d}\big(\hat{\theta}_n(\boldsymbol{x})_k - \theta_{0_k}\big)\big(\hat{\theta}_n(\boldsymbol{x})_l - \theta_{0_l}\big)\frac{\partial^2}{\partial\theta_k\partial\theta_l}q_j(\boldsymbol{\theta_0^*}).
\end{align*}
Since this holds for all $ j\in\left\lbrace 1,2,\ldots,d \right\rbrace$, we have that
\begin{align*}
\nonumber \nabla q(\boldsymbol{\theta_0})\big(\boldsymbol{\hat{\theta}_n(x)} - \boldsymbol{\theta_0}\big) &= q\big(\boldsymbol{\hat{\theta}_n(X)}\big) - q(\boldsymbol{\theta_0}) \\
&\quad\!\!\!\!\!\!\!\!\!\!\!\!\!\!\!\!\!\!\!\!\!\!\!\!- \frac{1}{2}\sum_{k=1}^{d}\sum_{l=1}^{d}\big(\hat{\theta}_n(\boldsymbol{x})_k - \theta_{0_k}\big)\big(\hat{\theta}_n(\boldsymbol{x})_l - \theta_{0_l}\big)\frac{\partial^2}{\partial\theta_k\partial\theta_l}q(\boldsymbol{\theta_0^*}),
\end{align*}
which then leads to
\begin{align}
\label{mid_step_proof}
\nonumber & \sqrt{n}[I(\boldsymbol{\theta_0})]^{\frac{1}{2}}\left(\boldsymbol{\hat{\theta}_n(X)} - \boldsymbol{\theta_0}\right) = \sqrt{n}[I(\boldsymbol{\theta_0})]^{\frac{1}{2}}[\nabla q(\boldsymbol{\theta_0})]^{-1}\left(q\big(\boldsymbol{\hat{\theta}_n(X)}\big)-q(\boldsymbol{\theta_0})\right)\\
& - \frac{\sqrt{n}}{2}[I(\boldsymbol{\theta_0})]^{\frac{1}{2}}[\nabla q(\boldsymbol{\theta_0})]^{-1}\!\sum_{k=1}^{d}\sum_{l=1}^{d}\!\left(\hat{\theta}_n(\boldsymbol{x})_k - \theta_{0_k}\right)\!\left(\hat{\theta}_n(\boldsymbol{x})_l - \theta_{0_l}\right)\frac{\partial^2}{\partial\theta_k\partial\theta_l}q(\boldsymbol{\theta_0^*}).
\end{align}
Subtracting the quantity $\sqrt{n}K^{-\frac{1}{2}}\left(q\big(\boldsymbol{\hat{\theta}_n(X)}\big) - q\left(\boldsymbol{\theta_0}\right)\right)$ from both sides of \eqref{mid_step_proof} now gives
\begin{align}
\nonumber & \sqrt{n}[I(\boldsymbol{\theta_0})]^{\frac{1}{2}}\left(\boldsymbol{\hat{\theta}_n(X)} - \boldsymbol{\theta_0}\right) - \sqrt{n}K^{-\frac{1}{2}}\left(q\big(\boldsymbol{\hat{\theta}_n(X)}\big) - q(\boldsymbol{\theta_0})\right)\\
\nonumber & = \sqrt{n}\left([I(\boldsymbol{\theta_0})]^{\frac{1}{2}}[\nabla q(\boldsymbol{\theta_0})]^{-1} - K^{-\frac{1}{2}}\right)\left(q\big(\boldsymbol{\hat{\theta}_n(X)}\big)-q(\boldsymbol{\theta_0})\right)\\
\nonumber & \quad - \!\frac{\sqrt{n}}{2}[I(\boldsymbol{\theta_0})]^{\frac{1}{2}}[\nabla q(\boldsymbol{\theta_0})]^{-1}\!\sum_{k=1}^{d}\sum_{l=1}^{d}\!\left(\hat{\theta}_n(\boldsymbol{x})_k - \theta_{0_k}\right)\!\left(\hat{\theta}_n(\boldsymbol{x})_l - \theta_{0_l}\right)\!\frac{\partial^2}{\partial\theta_k\partial\theta_l}q(\boldsymbol{\theta_0^*}).
\end{align}
Componentwise we have that
\begin{align}
\label{final_step_rem_term}
\nonumber & \sqrt{n}\left[\left[I(\boldsymbol{\theta_0})\right]^{\frac{1}{2}}\right]_{[j]}\left(\boldsymbol{\hat{\theta}_n(X)} - \boldsymbol{\theta_0}\right) - \sqrt{n}\big[K^{-\frac{1}{2}}\big]_{[j]}\left(q\big(\boldsymbol{\hat{\theta}_n(X)}\big) - q(\boldsymbol{\theta_0})\right)\\
\nonumber & = \sqrt{n}\left(\left[\left[I(\boldsymbol{\theta_0})\right]^{\frac{1}{2}}\left[\nabla q(\boldsymbol{\theta_0})\right]^{-1}\right]_{[j]} - \big[K^{-\frac{1}{2}}\big]_{[j]}\right)\left(q\big(\boldsymbol{\hat{\theta}_n(X)}\big)-q(\boldsymbol{\theta_0})\right)\\
\nonumber & \quad - \frac{\sqrt{n}}{2}\left[\left[I(\boldsymbol{\theta_0})\right]^{\frac{1}{2}}\left[\nabla q(\boldsymbol{\theta_0})\right]^{-1}\right]_{[j]}\nonumber\\
\nonumber &\quad\times \sum_{k=1}^{d}\sum_{l=1}^{d}\left(\hat{\theta}_n(\boldsymbol{x})_k - \theta_{0_k}\right)\left(\hat{\theta}_n(\boldsymbol{x})_l - \theta_{0_l}\right)\frac{\partial^2}{\partial\theta_k\partial\theta_l}q(\boldsymbol{\theta_0^*})\\
\nonumber & = \sqrt{n}\sum_{k=1}^{d}\left\lbrace\left(\left[I(\boldsymbol{\theta_0})\right]^{\frac{1}{2}}\left[\nabla q(\boldsymbol{\theta_0})\right]^{-1} - K^{-\frac{1}{2}}\right)_{jk}\left(q_k\big(\boldsymbol{\hat{\theta}_n(X)}\big)-q_k(\boldsymbol{\theta_0})\right)\right\rbrace\\
\nonumber & \quad - \frac{\sqrt{n}}{2}\sum_{k=1}^{d}\left\lbrace\vphantom{(\left(\sup_{\theta:|\theta-\theta_0|\leq\epsilon}\left|l^{(3)}(\theta;\boldsymbol{X})\right|\right)^2}\left(\left[I(\boldsymbol{\theta_0})\right]^{\frac{1}{2}}\left[\nabla q(\boldsymbol{\theta_0})\right]^{-1}\right)_{jk}\right.\\
& \quad\left.\times \sum_{m=1}^{d}\sum_{l=1}^{d}\left(\hat{\theta}_n(\boldsymbol{x})_m - \theta_{0_m}\right)\left(\hat{\theta}_n(\boldsymbol{x})_l - \theta_{0_l}\right)\frac{\partial^2}{\partial\theta_m\partial\theta_l}q_k(\boldsymbol{\theta_0^*})\vphantom{(\left(\sup_{\theta:|\theta-\theta_0|\leq\epsilon}\left|l^{(3)}(\theta;\boldsymbol{X})\right|\right)^2}\right\rbrace.
\end{align}
We need to take into account that $\big|\frac{\partial^2}{\partial\theta_k\partial\theta_l}q_j(\boldsymbol{\theta})\big|$ is not uniformly bounded in $\boldsymbol{\theta}$. However, due to (R.C.3), it is assumed that for any $\boldsymbol{\theta_0} \in \Theta$ there exists $0< \epsilon = \epsilon(\boldsymbol{\theta_0})$ and constants $M_{jkl}(\boldsymbol{X}), \forall j,k,l \in \left\lbrace 1,2,\ldots,d \right\rbrace$, such that $\big|\frac{\partial^2}{\partial\theta_k\partial\theta_l}q_j(\boldsymbol{\theta})\big|\leq M_{jkl}(\boldsymbol{X})$ for all $\boldsymbol{\theta} \in \Theta$ such that $|\theta_j - \theta_{0_j}|<\epsilon, \; \forall j \in \left\lbrace 1,2,\ldots,d \right\rbrace$. Therefore, with $\epsilon > 0$ and $Q_{(m)}$ as in (\ref{cm}), the law of total expectation yields
\begin{align}
\label{lawtotalexp}
\nonumber R_2 = |\mathbb{E}[C_1]|  \leq \mathbb{E}|C_1| &= \mathbb{E}\left[|C_1|\middle| \left|Q_{(m)}\right|\geq\epsilon\right]\Prob\left(\left|Q_{(m)}\right|\geq\epsilon\right)\\
& \quad+ \mathbb{E}\left[|C_1|\middle| \left|Q_{(m)}\right| < \epsilon\right]\Prob\left(\left|Q_{(m)}\right| < \epsilon\right).
\end{align}
The Markov inequality applied to $\Prob\left(\left|Q_{(m)}\right|\geq\epsilon\right)$ yields
\begin{align}
\label{Markov}
\nonumber \Prob\left(\left|Q_{(m)}\right|\geq\epsilon\right) &= \Prob\left(\left|\hat{\theta}_n(\boldsymbol{X})_{(m)} - \theta_{0_{(m)}}\right|\geq\epsilon\right)\\
 &\leq \Prob\left(\sum_{j=1}^{d}\left(\hat{\theta}_n(\boldsymbol{X})_j - \theta_{0_j}\right)^2 \geq \epsilon^2 \right)\nonumber\\
&\leq \frac{1}{\epsilon^2}\mathbb{E}\left[\sum_{j=1}^{d}\left(\hat{\theta}_n(\boldsymbol{X})_j - \theta_{0_j}\right)^2\right].
\end{align}
In addition, $\Prob(|Q_{(m)}| < \epsilon) \leq 1$, which is a reasonable bound because the consistency of the MLE, a result of (R.C.1)-(R.C.5), ensures that $\big|\hat{\theta}_n(\boldsymbol{x})_{(m)} - \theta_{0_{(m)}}\big|$ should be small and therefore $\Prob(|Q_{(m)}| < \epsilon) = \Prob(|\hat{\theta}_n(\boldsymbol{x})_{(m)} - \theta_{0_{(m)}}| < \epsilon) \approx 1$. This result along with \eqref{Markov} applied to \eqref{lawtotalexp} gives
\begin{equation}
R_2 \leq 2 \frac{\|h\|}{\epsilon^2}\mathbb{E}\left[\sum_{j=1}^{d}\left(\hat{\theta}_n(\boldsymbol{X})_j - \theta_{0_j}\right)^2\right] + \mathbb{E}\left[|C_1|\middle| \left|Q_{(m)}\right| < \epsilon\right].
\end{equation}
For an upper bound on $\mathbb{E}\left[|C_1|\middle| \left|Q_{(m)}\right| < \epsilon\right]$, using \eqref{boundC1} and \eqref{final_step_rem_term} yields 
\begin{align*}
 R_2 &\leq 2\frac{\|h\|}{\epsilon^2}\mathbb{E}\left[\sum_{j=1}^{d}\!\left(\hat{\theta}_n(\boldsymbol{X})_j-\theta_{0_j}\right)^2\right]\!+\! \frac{|h|_1}{\sqrt{n}}\sum_{j=1}^{d}\!\Big\{\mathbb{E}\big[\big| A_{1j}(\boldsymbol{\theta_0},\boldsymbol{X})|\,\big|\,\left|Q_{(m)}\right|< \epsilon\big] \\
 &\quad+\mathbb{E}\big[\big| A_{2j}(\boldsymbol{\theta_0},\boldsymbol{X})|\,\big|\,\left|Q_{(m)}\right|< \epsilon\big]\Big\},
\end{align*}
with $A_{1j}(\boldsymbol{\theta_0},\boldsymbol{X})$ and $A_{2j}(\boldsymbol{\theta_0},\boldsymbol{X})$ given as in the statement of the theorem. Summing our bounds for $R_1$ and $R_2$ gives the assertion of the theorem.
\hfill $\square$

\section{Normally distributed random variables}
\label{sec:normal_example}


In this section, we illustrate the application of the general bound of Theorem \ref{Theorem_delta_multi} by considering the important case that $X_1,X_2, \ldots, X_n$ are i.i.d$.$ ${\rm N}(\mu,\sigma^2)$ random variables with $\boldsymbol{\theta_0}=(\mu,\sigma^2)$.  Here the density function is
\begin{equation}
\label{likelihoodexample}
f(x|\boldsymbol{\theta}) = \frac{1}{\sqrt{2\pi\sigma^2}}\exp\left\lbrace -\frac{1}{2\sigma^2}(x-\mu)^2 \right\rbrace,\quad x \in \mathbb{R}. 
\end{equation}
It is well-known that in this case the MLE exists, is unique and equal to $\boldsymbol{\hat{\theta}_n(X)} = \big(\bar{X}, \frac{1}{n}\sum_{i=1}^{n}\left(X_i-\bar{X}\right)^2\big)^{\intercal}$; see for example \cite[p.118]{Davison}.  As we shall see in the proof of the following corollary, the MLE has a representation of the form (\ref{qproperty}).  Thus, Theorem \ref{Theorem_delta_multi} can be applied to derive the following bound. Empirical results are given after the proof. 
\begin{corollary}\label{cor3.1}
Let $X_1, X_2, \ldots, X_n$ be i.i.d$.$  ${\rm N}\left(\mu, \sigma^2\right)$ random variables and let $\boldsymbol{Z} \sim \mathrm{N}_{2}(\boldsymbol{0}, I_{2\times 2})$.  Then, for all $h\in C_b^2(\mathbb{R}^2)$,
\begin{align}
\label{generalboundDeltamultivariate}
& \left|\mathbb{E}\left[h\left(\sqrt{n}\left[I(\boldsymbol{\theta_0})\right]^{\frac{1}{2}}\left(\boldsymbol{\hat{\theta}_n(X)} - \boldsymbol{\theta_0}\right)\right)\right]- \mathbb{E}[h(\boldsymbol{Z})]\right| \leq \frac{7|h|_2}{\sqrt{n}} + \frac{|h|_1}{\sqrt{2n}}.
\end{align}
\end{corollary}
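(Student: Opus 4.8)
The plan is to verify that the normal MLE fits the template of Theorem \ref{Theorem_delta_multi} and then to evaluate the resulting bound, exploiting a pivoting argument to keep the computation clean. First I would record the representation \eqref{qproperty}: since $\bar X=\frac1n\sum_i X_i$ and $\frac1n\sum_i(X_i-\bar X)^2=\frac1n\sum_i X_i^2-\bar X^2$, setting $g(x)=(x,x^2)^\intercal$ and $q(\theta_1,\theta_2)=(\theta_1,\theta_2+\theta_1^2)^\intercal$ gives $q(\boldsymbol{\hat{\theta}_n(X)})=\frac1n\sum_i g(\boldsymbol{X_i})$, with $\boldsymbol{\theta_0}=(\mu,\sigma^2)$. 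Note that $q$ is twice differentiable with $\frac{\partial^2}{\partial\theta_1^2}q_2\equiv 2$ and all other second derivatives identically zero, so the second derivatives of $q$ are uniformly bounded and the simplified bound \eqref{uniformlyboundedq} of Remark \ref{remarkdeltamulti} applies, with no need for $\epsilon$ or the conditional expectations.

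Next I would reduce to the standard case $\boldsymbol{\theta_0}=(0,1)$. Writing $Y_i=(X_i-\mu)/\sigma\sim{\rm N}(0,1)$, one checks that $\sqrt n[I(\boldsymbol{\theta_0})]^{\frac12}(\boldsymbol{\hat{\theta}_n(X)}-\boldsymbol{\theta_0})$ is a parameter-free function of the $Y_i$: its first component is $\sqrt n(\bar X-\mu)/\sigma\sim{\rm N}(0,1)$ and its second is a function of $\tfrac1{\sigma^2}\sum_i(X_i-\bar X)^2\sim\chi^2_{n-1}$, and these are independent. Hence the left-hand side of \eqref{generalboundDeltamultivariate} does not depend on $(\mu,\sigma^2)$, so it suffices to bound the right-hand side of \eqref{uniformlyboundedq} at $(\mu,\sigma^2)=(0,1)$. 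There $I(\boldsymbol{\theta_0})=\mathrm{diag}(1,\tfrac12)$ and $\nabla q(\boldsymbol{\theta_0})=I_{2\times 2}$ are diagonal, whence by \eqref{Kmatrix_general} $K=\mathrm{diag}(1,2)$, $K^{-\frac12}=\mathrm{diag}(1,\tfrac1{\sqrt2})$, and $[I(\boldsymbol{\theta_0})]^{\frac12}[\nabla q(\boldsymbol{\theta_0})]^{-1}=K^{-\frac12}$. This last identity forces $A_{1j}(\boldsymbol{\theta_0},\boldsymbol{X})\equiv 0$, so only the $A_{2j}$ term survives in \eqref{uniformlyboundedq}.

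It then remains to evaluate the two terms. For the $|h|_2$ term, $\xi_{i1}=X_i$ and $\xi_{i2}=\tfrac1{\sqrt2}(X_i^2-1)$ with $X_i\sim{\rm N}(0,1)$; by the i.i.d.\ assumption the sum over $i$ contributes a factor $n$, reducing the bracket to the single-observation quantities $\mathbb{E}|\xi_1|^3$, $\mathbb{E}|\xi_1^2\xi_2|$, $\mathbb{E}|\xi_1\xi_2^2|$, $\mathbb{E}|\xi_2|^3$, together with $\mathbb{E}|\xi_1|$, $\mathbb{E}|\xi_2|$ and the covariances $\mathbb{E}[\xi_{1j}\xi_{1k}]$ (which equal the entries of $I_{2\times 2}$ by Lemma \ref{lem2.3}). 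Evaluating these needs the Gaussian moments $\mathbb{E}|X|,\mathbb{E}|X|^3,\mathbb{E}|X|^5$ and the absolute moments $\mathbb{E}|X^2-1|,\ \mathbb{E}[X^2|X^2-1|],\ \mathbb{E}|X^2-1|^3$, which I would obtain by splitting on $\{|X|<1\}$ versus $\{|X|>1\}$ and integrating by parts, producing closed forms in $\phi(1)$ and $\Phi(1)$. Summing over $j,k,l\in\{1,2\}$ and multiplying by $\tfrac{\sqrt\pi}{4\sqrt2\sqrt n}$ gives a numerical constant strictly below $7$ (approximately $6.6$), which I bound by $7$ to obtain $\tfrac{7|h|_2}{\sqrt n}$. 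For the $|h|_1$ term, only $A_{22}$ is nonzero: using $[I(\boldsymbol{\theta_0})]^{\frac12}[\nabla q(\boldsymbol{\theta_0})]^{-1}=\mathrm{diag}(1,\tfrac1{\sqrt2})$ and the single nonzero second derivative $\frac{\partial^2}{\partial\theta_1^2}q_2=2$, I find $A_{22}=\tfrac{n}{\sqrt2}\bar X^2$, so $\mathbb{E}|A_{22}|=\tfrac{n}{\sqrt2}\mathbb{E}[\bar X^2]=\tfrac1{\sqrt2}$ and the term equals $\tfrac{|h|_1}{\sqrt{2n}}$.

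The main obstacle is the moment bookkeeping in the $|h|_2$ term: the absolute moments $\mathbb{E}[X^2|X^2-1|]$ and $\mathbb{E}|X^2-1|^3$ must be computed by partitioning on the event $\{|X|<1\}$, and the aggregate constant must be tracked carefully to confirm it does not exceed $7$. Conceptually, the pivoting step is the key that makes everything else routine: without the reduction to $(\mu,\sigma^2)=(0,1)$ the matrix $[I(\boldsymbol{\theta_0})]^{\frac12}[\nabla q(\boldsymbol{\theta_0})]^{-1}-K^{-\frac12}$ is nonzero and the bound retains an $A_{1j}$ contribution, so establishing the parameter-independence of the left-hand side is what removes that term and yields the clean constants.
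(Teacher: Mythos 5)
Your proposal is correct and follows the same skeleton as the paper's proof --- verify the representation \eqref{qproperty}, invoke the simplified bound \eqref{uniformlyboundedq} since the second derivatives of $q$ are bounded by $2$, show $A_{1j}\equiv 0$, and evaluate $A_{22}$ and the $\xi$-moments --- but it differs in two points of execution. First, where you take $q(\theta_1,\theta_2)=(\theta_1,\theta_2+\theta_1^2)^\intercal$ and then pivot to $(\mu,\sigma^2)=(0,1)$ by noting that $\sqrt{n}[I(\boldsymbol{\theta_0})]^{\frac{1}{2}}(\boldsymbol{\hat{\theta}_n(X)}-\boldsymbol{\theta_0})$ is a parameter-free function of the standardised observations, the paper instead builds the true mean into the auxiliary function, choosing $q(\theta_1,\theta_2)=(\theta_1,\theta_2+(\theta_1-\mu)^2)^\intercal$, so that $\nabla q(\boldsymbol{\theta_0})=I_{2\times 2}$ and hence $[I(\boldsymbol{\theta_0})]^{\frac{1}{2}}[\nabla q(\boldsymbol{\theta_0})]^{-1}=K^{-\frac{1}{2}}$ for every $(\mu,\sigma^2)$, with no invariance argument needed; the two choices coincide at $(0,1)$ and both are legitimate, your diagnosis that the pivot is what kills the $A_{1j}$ term for your choice of $q$ being accurate. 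Second, for the $|h|_2$ term the paper does not compute the mixed absolute moments exactly: it bounds $\mathbb{E}|\xi_{i2}|\leq 1$ and $\mathbb{E}|\xi_{i2}|^3\leq 15^{3/4}$ by H\"older and treats the cross terms as products such as $\mathbb{E}\xi_{i1}^2\,\mathbb{E}|\xi_{i2}|$, arriving at the constant $6.833$, whereas your exact evaluation by splitting on $\{|X|<1\}$ gives approximately $\tfrac{\sqrt{\pi}}{4\sqrt{2}}(15.26+5.93)\approx 6.64$, consistent with your estimate. Your route is in fact the safer one: since $\xi_{i1}$ and $\xi_{i2}$ are functions of the same $X_i$, the factorisation $\mathbb{E}|\xi_{i1}^2\xi_{i2}|=\mathbb{E}\xi_{i1}^2\,\mathbb{E}|\xi_{i2}|$ implicit in the paper's display \eqref{term1} is not an identity (the true value of $3\mathbb{E}|\xi_{i1}^2\xi_{i2}|$ is about $4.61$ rather than $3$); the slack in the H\"older bound on $\mathbb{E}|\xi_{i2}|^3$ happens to absorb this, so both computations land below the stated constant $7$. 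The treatment of $A_{22}$ and the resulting $|h|_1/\sqrt{2n}$ term is identical in the two arguments.
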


\begin{remark}\label{remark3.2} An upper bound for the distributional distance between the MLE of the normal distribution, treated under canonical parametrisation, with unknown mean and variance has also been obtained in \cite{a15}. That bound is also of order $O(n^{-1/2})$, but our numerical constants are two orders of magnitude smaller. Furthermore, our test functions $h$ are less restrictive, coming from the class $C_b^2(\mathbb{R}^2)$ rather than the class $C_b^3(\mathbb{R}^2)$. Finally, the bound in \eqref{generalboundDeltamultivariate} does not depend on the parameter $\boldsymbol{\theta_0}=(\mu,\sigma^2)$, while the bound given in \cite{a15} depends on the natural parameters and blows up when the variance tends to zero or infinity.\end{remark}

\begin{proof}
From the general approach in the previous section, we want to have $q:\Theta \rightarrow \mathbb{R}^2$, such that
\begin{equation}
\nonumber q\big(\boldsymbol{\hat{\theta}_n(x)}\big) = \frac{1}{n}\sum_{i=1}^{n}\begin{pmatrix}
g_{1}(x_i)\\g_{2}(x_i)
\end{pmatrix}.
\end{equation}
We take
\begin{equation}
\label{qtheta1}
q(\boldsymbol{\theta}) = q(\theta_1,\theta_2) = \big(\theta_1, \theta_2 + (\theta_1-\mu)^2\big)^\intercal,
\end{equation}
which then yields
\begin{align}
\label{qfunctionexample}
\nonumber q(\boldsymbol{\hat{\theta}_n(X)}) &= q\bigg(\bar{X}, \frac{1}{n}\sum_{i=1}^{n}(X_i - \bar{X})^2\bigg)^\intercal\! =\! \bigg(\bar{X}, \frac{1}{n}\sum_{i=1}^{n}(X_i - \bar{X})^2 + \left(\bar{X} - \mu\right)^2\bigg)^\intercal\\
& = \bigg(\frac{1}{n}\sum_{i=1}^{n}X_i, \frac{1}{n}\sum_{i=1}^{n}(X_i - \mu)^2\bigg)^\intercal,
\end{align}
and therefore
\begin{equation}
\label{gfunctionexample}
g_{1}(x_i) = x_i, \quad g_{2}(x_i) = \left(x_i - \mu\right)^2,
\end{equation}
and thus the MLE has a representation of the form (\ref{qproperty}).

We now note that the Jacobian matrix of $q(\boldsymbol{\theta})$ evaluated at $\boldsymbol{\theta_0} = (\mu, \sigma^2)$ is 
\begin{equation}
\label{qnablafunction_example}
\nabla q(\boldsymbol{\theta_0}) = \begin{pmatrix}
1 & 0\\ 0 & 1
\end{pmatrix}.
\end{equation}
Furthermore, it is easily deduced from \eqref{qtheta1} that $\forall k,m,l \in \left\lbrace 1,2\right\rbrace$, we have that  $\big|\frac{\partial^2}{\partial\theta_m\partial\theta_l}q_k(\boldsymbol{\theta})\big| \leq 2$. Hence, using Remark \ref{remarkdeltamulti} we conclude that the general bound is as in \eqref{uniformlyboundedq} and no positive constant $\epsilon$ is necessary. Now, for $l(\boldsymbol{\theta_0};\boldsymbol{X})$ denoting the log-likelihood function, we obtain from \eqref{likelihoodexample} that the expected Fisher information matrix is 
\begin{equation}
\label{fisher_example}
I(\boldsymbol{\theta_0}) = -\frac{1}{n}\mathbb{E}\left[\nabla^{\intercal}\nabla \left(l(\boldsymbol{\theta_0};\boldsymbol{X})\right) \right] = \begin{pmatrix}
\frac{1}{\sigma^2} & 0\\ 0 & \frac{1}{2\sigma^4}
\end{pmatrix}.
\end{equation}

The results in \eqref{qnablafunction_example} and \eqref{fisher_example} yield
\begin{equation}
\label{Kmatrix_example}
K = \left[\nabla q(\boldsymbol{\theta_0})\right]\left[I(\boldsymbol{\theta_0})\right]^{-1}\left[\nabla q(\boldsymbol{\theta_0})\right]^{\intercal} = [I(\boldsymbol{\theta_0})]^{-1} = \begin{pmatrix}
\sigma^2 & 0\\0&2\sigma^4
\end{pmatrix}.
\end{equation}
Using \eqref{W}, \eqref{qfunctionexample}, \eqref{gfunctionexample} and \eqref{Kmatrix_example}, we get that 
\begin{align}
\nonumber \boldsymbol{W} &= \frac{K^{-\frac{1}{2}}}{\sqrt{n}}\sum_{i=1}^{n}\left[g(X_i) - q(\boldsymbol{\theta_0})\right] = \frac{1}{\sqrt{n}}\begin{pmatrix}
\frac{1}{\sigma}& 0\\0 & \frac{1}{\sqrt{2}\sigma^2}
\end{pmatrix}\sum_{i=1}^{n}\begin{pmatrix}
X_i - \mu\\ (X_i- \mu)^2 - \sigma^2
\end{pmatrix}\\
\nonumber & = \begin{pmatrix}
\sum_{i=1}^{n}\frac{X_i - \mu}{\sigma\sqrt{n}}\\ \sum_{i=1}^{n}\frac{\left(X_i - \mu\right)^2 - \sigma^2}{\sqrt{2n}\sigma^2}
\end{pmatrix},
\end{align}
and therefore, for $i\in\{1,\ldots,n\}$,
\begin{equation}
\nonumber \xi_{i1} = \frac{X_i - \mu}{\sigma}, \qquad \xi_{i2} = \frac{(X_i - \mu)^2 - \sigma^2}{\sqrt{2}\sigma^2}.
\end{equation}

For the first term of the general upper bound in \eqref{uniformlyboundedq}, we are required to bound certain expectations involving $\xi_{ij}$. We first note that $\frac{X_i-\mu}{\sigma}\stackrel{\mathrm{d}}{=}Z\sim \mathrm{N}(0,1)$. Using the standard formulas for the moments and absolute moments of the standard normal distribution (see \cite{w12}), we have, for all $i=1,\ldots,n$,
\begin{equation*}
\mathbb{E}|\xi_{i1}|=\mathbb{E}|Z|=\sqrt{\frac{2}{\pi}}, \quad \mathbb{E}\xi_{i1}^2=\mathbb{E}Z^2=1, \quad \mathbb{E}|\xi_{i1}|^3=\mathbb{E}|Z|^3=2\sqrt{\frac{2}{\pi}}
\end{equation*}
and, by H\"{o}lder's inequality,
\begin{eqnarray*}
\mathbb{E}|\xi_{i2}|&=&\frac{1}{\sqrt{2}}\mathbb{E}\left|Z^2-1\right|\leq\frac{1}{\sqrt{2}}(\mathbb{E}\left(Z^2-1\right)^2)^{1/2}=1, \\
 \mathbb{E}\xi_{i2}^2&=&\frac{1}{2}\mathbb{E}\left(Z^2-1\right)^2=1, \\
\mathbb{E}\left|\xi_{i2}\right|^3 & =& \frac{1}{2^{3/2}}\mathbb{E}\left|Z^2-1\right|^3 \leq \frac{1}{2^{3/2}}\left(\mathbb{E}\left(Z^2-1\right)^4\right)^{3/4}\\
&=&\frac{1}{2^{3/2}}\left(\mathbb{E}Z^8-4\mathbb{E}Z^6+6\mathbb{E}Z^4-4\mathbb{E}Z^2+1\right)^{3/4} \\
&=&\frac{1}{2^{3/2}}(105-60+18-4+1)^{3/4} =15^{3/4}.
\end{eqnarray*}
Therefore,
\begin{align}
\label{term1}
\nonumber \sum_{i=1}^n\sum_{j,k,l}^2\mathbb{E}|\xi_{ij}\xi_{ik}\xi_{il}|&=\sum_{i=1}^n(\mathbb{E}|\xi_{i1}|^3+3\mathbb{E}\xi_{i1}^2\mathbb{E}|\xi_{i2}|+3\mathbb{E}|\xi_{i1}|\mathbb{E}\xi_{i2}^2+\mathbb{E}|\xi_{i2}|^3) \\
&\leq\sum_{i=1}^n\bigg(2\sqrt{\frac{2}{\pi}}+3+3\sqrt{\frac{2}{\pi}}+15^{3/4}\bigg)<14.612n,
\end{align}
and
\begin{equation}
\label{term2}
\sum_{i=1}^n\sum_{j,k,l=1}^2|\mathbb{E}\xi_{ij}\xi_{ik}|\mathbb{E}|\xi_{il}|=\sum_{i=1}^n\sum_{j,l=1}^2\mathbb{E}\xi_{ij}^2\mathbb{E}|\xi_{il}|=2n\bigg(1+\sqrt{\frac{2}{\pi}}\bigg).
\end{equation}
Applying the results of \eqref{term1} and \eqref{term2} to the first term of the general bound in \eqref{uniformlyboundedq} yields
\begin{align}
\label{firsttermexample}
\nonumber &\frac{\sqrt{\pi}|h|_2}{4\sqrt{2}n^{3/2}}\sum_{i=1}^n\sum_{j,k,l=1}^2\Big\{\mathbb{E}|\xi_{ij}\xi_{ik}\xi_{il}|+2|\mathbb{E}\xi_{ij}\xi_{ik}|\mathbb{E}|\xi_{il}|\Big\}\\
 &\quad \leq\frac{\sqrt{\pi}|h|_2}{4\sqrt{2}n^{3/2}}\bigg(4n\bigg(1+\sqrt{\frac{2}{\pi}}\bigg)+14.612n\bigg)= \frac{6.833|h|_2}{\sqrt{n}} \leq \frac{7|h|_2}{\sqrt{n}}.
\end{align}

For the second term, we need to calculate $A_{1j}(\boldsymbol{\theta_0},\boldsymbol{X})$ and $A_{2j}(\boldsymbol{\theta_0},\boldsymbol{X})$ as in Theorem \ref{Theorem_delta_multi}. The results of \eqref{qnablafunction_example} and \eqref{Kmatrix_example} yield to
\begin{equation}
\nonumber \left[I(\boldsymbol{\theta_0})\right]^{\frac{1}{2}}\left[\nabla(q(\boldsymbol{\theta_0}))\right]^{-1} - K^{-\frac{1}{2}} = \boldsymbol{0}_{2\times 2},
\end{equation}
where $\boldsymbol{0}_{2\times 2}$ denotes the 2 by 2 zero matrix. Therefore $A_{1j}(\boldsymbol{\theta_0},\boldsymbol{X})=0$ for $j=1,2$. In order to calculate $A_{2j}(\boldsymbol{\theta_0},\boldsymbol{X})$, we note that
\begin{align}
\nonumber & \frac{\partial^2}{\partial\theta_m\partial\theta_l}q_1(\boldsymbol{\theta}) = 0, \quad\forall m,l\in \left\lbrace 1,2 \right\rbrace,\\
\nonumber & \frac{\partial^2}{\partial\theta_1\partial\theta_2}q_2(\boldsymbol{\theta}) = \frac{\partial^2}{\partial\theta_2\partial\theta_1}q_2(\boldsymbol{\theta}) = \frac{\partial^2}{\partial\theta_2^2}q_2(\boldsymbol{\theta}) = 0,\\
\nonumber & \frac{\partial^2}{\partial\theta_1^2}q_2(\boldsymbol{\theta}) = 2.
\end{align}
Using this result as well as \eqref{qnablafunction_example} and \eqref{fisher_example}, we get that
\begin{align*}
 A_{21}(\boldsymbol{\theta_0},\boldsymbol{X}) &= 0,\\
A_{22}(\boldsymbol{\theta_0},\boldsymbol{X}) &=  \frac{n}{2\sqrt{2}\sigma^2}2\left(\bar{X} - \mu\right)^2 = \frac{n}{\sqrt{2}\sigma^2}\left(\bar{X} - \mu\right)^2.
\end{align*}
Therefore the last term of the general upper bound in \eqref{uniformlyboundedq} becomes
\begin{align}
\label{A2example}
\nonumber  &\frac{|h|_1}{\sqrt{n}}\sum_{j=1}^2\big\{\mathbb{E}|A_{1j}(\boldsymbol{\theta_0},\boldsymbol{X})| +\mathbb{E}|A_{2j}(\boldsymbol{\theta_0},\boldsymbol{X})|\big\}\\
  &\quad= \frac{|h|_1}{\sqrt{n}}\mathbb{E}\left| A_{22}(\boldsymbol{\theta_0},\boldsymbol{X}) \right| = \frac{\sqrt{n}}{\sqrt{2}\sigma^2}|h|_1\mathbb{E}\left[\left(\bar{X}- \mu\right)^2\right] = \frac{|h|_1}{\sqrt{2n}}.
\end{align}
Finally, the bounds in \eqref{firsttermexample} and \eqref{A2example} are applied to \eqref{uniformlyboundedq} to obtain the assertion of the corollary.
\end{proof}
Here, we carry out a large-scale simulation study to investigate the accuracy of the bound in \eqref{generalboundDeltamultivariate}. For $n=10^j, j=3,4,5,6$, we start by generating $10^4$ trials of $n$ random independent observations, $X$, following the ${\rm N}(\mu, \sigma^2)$ distribution. We take $\mu =1$, $\sigma^2 = 1$ for our simulations. Then $\sqrt{n}\left[I(\boldsymbol{\theta_0})\right]^{\frac{1}{2}}\big(\boldsymbol{\hat{\theta}_n(X)} - \boldsymbol{\theta_0}\big)$ is evaluated in each trial, which in turn gives a vector of $10^4$ values. The function $h(x,y) = \left(x^2+y^2+1\right)^{-1}$, which belongs in $C_b^2(\mathbb{R}^2)$, is then applied to these values in order to get the sample mean, which we denote by  $\hat{\EE}\big[h\big(\sqrt{n}\left[I(\boldsymbol{\theta_0})\right]^{\frac{1}{2}}\big(\boldsymbol{\hat{\theta}_n(X)} - \boldsymbol{\theta_0}\big)\big)\big]$. For the function $h$, it is straightforward to see that
\begin{equation*}
\label{boundsh}
|h|_1 = \frac{3\sqrt{3}}{8},\quad |h|_2 = 2.
\end{equation*}
We use these values to calculate the bound in \eqref{generalboundDeltamultivariate}. We define $$Q_{h}(\boldsymbol{\theta_0}):=\left|\hat{\EE}\left[h\left(\sqrt{n}\left[I(\boldsymbol{\theta_0})\right]^{\frac{1}{2}}\left(\boldsymbol{\hat{\theta}_n(X)} - \boldsymbol{\theta_0}\right)\right)\right] - \tilde{\EE}[h(\boldsymbol{Z})]\right|,$$
where $\tilde{\EE}[h(\boldsymbol{Z})] = 0.461$ is the approximation of $\EE[h(\boldsymbol{Z})]$ up to three decimal places. We compare $Q_{h}(\boldsymbol{\theta_0})$ with the bound in \eqref{generalboundDeltamultivariate}, using the difference between their values as a measure of the error. The results from the simulations are shown in Table \ref{tableresultsmultinormal} below.
\begin{table}[H]
\caption{Simulation results for the ${\rm N}(1,1)$ distribution}
\vspace{0.04in}
\centering
\begin{tabular}{l|l|l|l}
$n$ & $Q_h(\boldsymbol{\theta_0})$ & Upper bound & Error\\
\hline
\hline
$10^3$ & 0.011 & 0.457 & 0.446\\
\hline
$10^4$ & 0.010 & 0.145 & 0.135\\
\hline
$10^5$ & 0.009 & 0.046  & 0.037\\
\hline
$10^6$ & 0.006 & 0.014 & 0.008
  \end{tabular}
\label{tableresultsmultinormal}
  \end{table}
We see that the bound and the error decrease as the sample size gets larger. To be more precise, when at each step we increase the sample size by a factor of ten, the value of the upper bound drops by a factor close to $\sqrt{10}$, which is expected since the order of the bound is $O(n^{-1/2})$, as can be seen from \eqref{generalboundDeltamultivariate}.

\section*{Acknowledgements} This research occurred whilst AA was studying for a DPhil at the University of Oxford, supported by a Teaching Assistantship Bursary from the Department of Statistics, University of Oxford, and the EPSRC grant EP/K503113/1. RG acknowledges support from EPSRC grant EP/K032402/1 and is currently supported by a Dame Kathleen Ollerenshaw Research Fellowship. We would like to thank the referee for their helpful comments and suggestions.

\end{document}